\newtheorem{theorem}{Theorem}[section]
\newtheorem{lemma}[theorem]{Lemma}
\newtheorem{prop}[theorem]{Proposition}
\newtheorem{cor}[theorem]{Corollary}
\theoremstyle{definition}
\theoremstyle{remark}
\numberwithin{equation}{section}
\begin{document}
		\title[Advancement of numerical radius inequalities of operator and product of operators ]  {Advancement of numerical radius inequalities of operator and product of operators }
	\author[R.K. Nayak]{  Raj Kumar Nayak}

	\address{(Nayak) Department of Mathematics, SRM University-AP, Amaravati, India}
	\email{rajkumarju51@gmail.com}
	\subjclass[2010]{Primary 47A12, 47A30, Secondary  47A63, 15A60}
	\keywords{ Numerical radius; Norm inequality}
	\date{}
	\maketitle 
	\begin{abstract}
		In this article, we proved upper bounds for numerical radius of bounded linear operator and product of operators which generalize and improve existing inequalities. We also obtain a numerical radius inequality of invertible operator using Kantorovich's ratio. 
		\end{abstract}
		\section{Introduction}
		Let $\mathcal{B}({\mathcal{H}})$ be the $C^*$-algebra of all bounded linear operator on a complex Hilbert space $\mathcal{H}.$ Let $\mathcal{B}(\mathcal{H})^{-1}$ be the set of all invertible operators on the complex Hilbert space $\mathcal{H}$. The absolute value of $T$ is denoted by $|T|$, defined by $|T| = (T^*T)^{\frac{1}{2}},$ where $T^*$ is the Hilbert adjoint of the operator $T.$ The numerical range of $T \in \mathcal{B}(\mathcal{H})$ is denoted by $W(T)$, is the image of the unit sphere of $\mathcal{H}$ under the mapping $x \rightarrowtail \langle Tx, x \rangle.$ For $T \in \mathcal{B}(\mathcal{H})$, let 
		\begin{eqnarray*}
		 w(T) &=& \sup_{\|x\|=1} \left|\langle Tx, x \rangle \right|\\
			 \|T\| &=& \sup_{\|x\|=1} \|Tx\|
		\end{eqnarray*}
		 denoted by  numerical radius and  operator norm respectively. It is well known that $w(\cdot)$ forms a norm on $\mathcal{B}(\mathcal{H})$ which is equivalent to the usual operator norm by the following inequality: 
		\begin{equation}\label{eq 1}
		 \frac{\|T\|}{2} \leq w(T) \leq \|T\|.
		 \end{equation}
		  Many improvements of this inequality are developed in recent years. We state few of those improvements: \\ In \cite{kstudiamath1}, Kittaneh proved if $T \in \mathcal{B}(\mathcal{H})$, then 
		  \begin{equation} \label{eq 2}
		  w(T) \leq \frac{1}{2} \left\||T| + |T^*|\right\|. 
		  \end{equation}
		   This inequality was generalized later by El-Haddad and Kittaneh in \cite{kstudiamath3} which says that, if $T \in \mathcal{B}(\mathcal{H})$ then 
		   \begin{equation}\label{eq el kit}
		   w^{2r}(T) \leq \frac{1}{2} \left\||T|^{2r} + |T^*|^{2r}\right\|.
		   \end{equation}
		   		   Abu-Omar et. al. in \cite{abu omar} proved that for $T \in \mathcal{B}(\mathcal{H}),$
		   		   \begin{equation}\label{eq 3}
		   		   w^2(T) \leq \frac{1}{4} \left\||T|^2 + |T^*| \right\| + \frac{1}{2} w(T^2).
		   		   \end{equation}
		   		   In 2021, Bhunia et. al. in \cite{bulletin sci} proved that, for $T \in \mathcal{B}(\mathcal{H})$
		   		   \begin{equation}\label{eq 11}
		   		   w^2(T) \leq \frac{1}{4} \left\||T|^2 + |T^*|^2 \right\| + \frac{1}{2} w\left(|T||T^*| \right).
		   		   \end{equation} 
		   		   
		   Later Dragomir in \cite{dragomir} proved the following inequality for product of two operators, which assert that for $T, S \in \mathcal{B}(\mathcal{H})$ and $r \geq 1$ then, 
		   \begin{equation} \label{eq dragomir}
		   w^r(S^*T) \leq \frac{1}{2} \left\||T|^{2r} + |S|^{2r} \right\|.
		   \end{equation}
		   Recently, Al-Dolat et. al. in \cite{filomat} improved inequality (\ref{eq dragomir}), which say that for $T, S \in \mathcal{B}(\mathcal{H})$ and $\lambda \geq 0$ 
		   \begin{equation} \label{eq 4}
		   w^2(S^*T) \leq \frac{1}{2(1+\lambda)} \left\||T|^2 + |S|^2 \right\| w(S^*T) + \frac{\lambda}{2(1+\lambda)} \left\||T|^4 + |S|^4 \right\|.
		   \end{equation}
		   For further details of recent work on numerical radius inequalities, readers can see \cite{kstudiamath2, book, acta, palmo}.
		   The well-known Young inequality says that for any two positive real numbers $x, y$ and $t \in [0,1]$  then 
		   \begin{equation} \label{eq 7}
		   x^ty^{1-t} \leq tx + (1-t) y.
		   \end{equation} 
		   For $t = \frac{1}{2},$ we have the famous  arithmetic-geometric mean inequality, which says that for two positive real number $x, y$ 
		   \begin{equation}\label{eq am-gm}
		   \sqrt{xy} \leq \frac{x+y}{2}.
		   \end{equation}
		   The Kantorovich's ratio is defined by $$K(m, 2) = \frac{(m+1)^2}{4m}, $$ for a positive real number $m$. The Kantorovich ratio has the following properties: \\
		   
		   ~(1)~ $K(1,2) =1$ and $K(m, 2) = K(\frac{1}{m}, 2) >1.$\\
		   
		   ~(2)~$K(m, 2)$ is a monotone increasing function on the interval $[1, \infty)$ and monotone decreasing function on the interval $(0, 1].$ \\
		   
		   Zou et. all in \cite{jmi}  proved the following improvement of the famous Young inequality using Kantorovich's ratio, which says that for $x, y >0$ and $t \in [0,1]$,
		   \begin{equation}\label{eq 9}
		   K^r(m, 2) x^ty^{1-t} \leq tx +(1-t)y,
		   \end{equation}
		   where $r = \min \{t, 1-t \}.$ For further information readers are requested to go through \cite{mathann, laa}.  Nikzat et. al. in \cite{concr} obtain few improvements of numerical radius inequalities for invertible operator via Kantorovich ratio.\\
		   \noindent In this article, we obtain various numerical radius inequalities of operator and product of operators which generalize and improve on the bounds in (\ref{eq 1}), (\ref{eq 2}), (\ref{eq el kit}), (\ref{eq 3}), (\ref{eq dragomir}), (\ref{eq 4}) . In the last section we found a new improvement for numerical radius for invertible operator using Kantorovich ratio.
		\section{Main Results}
		In this section, we will prove various improvement of numerical radius upper bound. For this we need the following well-known lemmas. First result is a consequence of the spectral theorem along with Jensen's inequality.
		\begin{lemma}\label{positive op}\cite{mccarthy}
			Let $T \in \mathcal{B}(\mathcal{H})$ be a positive operator and $x$ be an unit vector in $\mathcal{H}.$ Then, for $r \geq 1,$ we have \[\langle Tx, x \rangle^r \leq \langle T^rx, x \rangle .\]
			
		\end{lemma}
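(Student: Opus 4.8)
The plan is to deduce the inequality from the spectral theorem for the positive operator $T$, exactly as the preamble suggests. Since $T$ is positive, its spectrum $\sigma(T)$ lies in $[0,\infty)$, and the spectral theorem furnishes a spectral measure $E$ supported on $\sigma(T)$ with $T = \int_{\sigma(T)} \lambda \, dE(\lambda)$ and, more generally, $T^r = \int_{\sigma(T)} \lambda^r \, dE(\lambda)$ for any $r \geq 1$ via the continuous functional calculus. The key device is to fix the unit vector $x$ and form the scalar measure $\mu_x(\Omega) = \langle E(\Omega) x, x \rangle$ on the Borel subsets $\Omega$ of $\sigma(T)$.

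First I would verify that $\mu_x$ is a genuine probability measure. Positivity of each $\langle E(\Omega)x, x \rangle$ follows because the spectral projections $E(\Omega)$ are positive operators, and the total mass is $\mu_x(\sigma(T)) = \langle E(\sigma(T)) x, x \rangle = \langle x, x \rangle = 1$, where the normalization $\|x\| = 1$ is exactly what makes this a probability measure rather than merely a finite measure. With $\mu_x$ in hand, both quantities in the statement become moments of $\mu_x$: one has $\langle Tx, x \rangle = \int_{\sigma(T)} \lambda \, d\mu_x(\lambda)$ and $\langle T^r x, x \rangle = \int_{\sigma(T)} \lambda^r \, d\mu_x(\lambda)$.

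The final step is an application of Jensen's inequality. Since $r \geq 1$, the function $\varphi(t) = t^r$ is convex on $[0,\infty)$, and $\sigma(T) \subseteq [0,\infty)$ ensures the integrand stays in the domain of convexity. Jensen's inequality for the probability measure $\mu_x$ then gives $\varphi\left(\int \lambda \, d\mu_x\right) \leq \int \varphi(\lambda) \, d\mu_x$, which upon unwinding the definitions reads precisely $\langle Tx, x \rangle^r \leq \langle T^r x, x \rangle$, as required.

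I expect the only genuinely delicate point to be the passage from the operator identities to scalar integrals against $\mu_x$ — that is, confirming that $\langle f(T) x, x \rangle = \int f \, d\mu_x$ for $f(t) = t$ and $f(t) = t^r$, which is a standard property of the spectral measure but deserves an explicit invocation. Everything else (positivity of $\mu_x$, the unit-mass normalization, and convexity of $t^r$) is routine once that dictionary between the functional calculus and integration against $\mu_x$ is in place.
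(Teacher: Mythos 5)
Your proof is correct and follows exactly the route the paper indicates: the paper gives no proof of this lemma, citing McCarthy and remarking only that it is ``a consequence of the spectral theorem along with Jensen's inequality,'' which is precisely the argument you carry out via the probability measure $\mu_x(\Omega)=\langle E(\Omega)x,x\rangle$ and convexity of $t\mapsto t^r$.
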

	Our next lemma is the norm inequality of non-negative convex function.
	\begin{lemma}\label{convex op} \cite{aujla}
		Let $f$ be a non-negative convex function on $[0, \infty)$ and $A, B \in \mathcal{B}(\mathcal{H})$ be positive operators. Then we have \[\left\|f\left(\frac{A+B}{2}\right)\right\| \leq \left\|\frac{f(A) + f(B)}{2}\right\|. \]
		In particular if $r \geq 1$, then \[\left\|\left(\frac{A+B}{2} \right)^r \right\| \leq \left\|\frac{A^r+B^r}{2}\right\|. \]
	\end{lemma}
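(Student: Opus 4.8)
The plan is to establish the general statement and then obtain the ``in particular'' clause for free by specializing to $f(t)=t^{r}$, which is non-negative and convex on $[0,\infty)$ whenever $r\ge 1$. Throughout I would abbreviate $M=\frac{A+B}{2}$ and $N=\frac{f(A)+f(B)}{2}$. Since $f\ge 0$, both $f(M)$ and $N$ are positive operators, so their norms are the suprema of their spectra; in particular the continuous functional calculus (spectral mapping) gives $\|f(M)\|=\sup_{\lambda\in\sigma(M)}f(\lambda)$, a fact I would record first.

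The guiding idea is that the naive ``operator Jensen'' route, namely deducing the operator inequality $f(V^{*}XV)\le V^{*}f(X)V$ (with $V=\tfrac{1}{\sqrt2}(I,I)^{\mathsf T}$ and $X=A\oplus B$, so that $V^{*}XV=M$ and $V^{*}f(X)V=N$), is unavailable here because it would require $f$ to be operator convex, which mere convexity does not provide. The remedy is to exploit that the desired bound involves only the operator norm, hence only the extreme point of the spectrum of $f(M)$, and this localizes the whole problem onto a single vector. Concretely, I would pick $\lambda^{*}\in\sigma(M)$ with $f(\lambda^{*})$ arbitrarily close to $\|f(M)\|$ and choose a unit vector $u$ which is an eigenvector (in infinite dimensions, an approximate eigenvector) of $M$ at $\lambda^{*}$, so that $\langle Mu,u\rangle\approx\lambda^{*}$. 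Setting $\alpha=\langle Au,u\rangle$ and $\beta=\langle Bu,u\rangle$, one has $\langle Mu,u\rangle=\frac{\alpha+\beta}{2}$.

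The two key inequalities then chain together. First, by convexity of $f$, $f\!\left(\frac{\alpha+\beta}{2}\right)\le\frac{f(\alpha)+f(\beta)}{2}$. Second, applying Jensen's inequality to the probability measure $S\mapsto\langle E(S)u,u\rangle$ furnished by the spectral theorem for the vector state $X\mapsto\langle Xu,u\rangle$ — precisely the mechanism underlying Lemma \ref{positive op} — gives $f(\alpha)=f(\langle Au,u\rangle)\le\langle f(A)u,u\rangle$ and likewise $f(\beta)\le\langle f(B)u,u\rangle$. Combining, $f(\langle Mu,u\rangle)\le\frac{\langle f(A)u,u\rangle+\langle f(B)u,u\rangle}{2}=\langle Nu,u\rangle\le\|N\|$. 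Letting $u$ track the top of the spectrum of $f(M)$ yields $\|f(M)\|\le\|N\|$, which is the claim.

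The step demanding the most care is the passage to infinite dimensions: the supremum defining $\|f(M)\|$ need not be attained by a genuine eigenvector, so I would instead work with a sequence $u_{n}$ of approximate eigenvectors, $\|(M-\lambda^{*})u_{n}\|\to 0$ with $\|u_{n}\|=1$, use the continuity of the convex function $f$ to push $\langle Mu_{n},u_{n}\rangle\to\lambda^{*}$ through $f$, and pass to the limit only at the end. Everything else reduces to the two elementary inequalities above, so the genuine obstacle is conceptual rather than computational: recognizing that the operator-norm formulation collapses the estimate onto the extremal spectral vector is exactly what allows ordinary convexity, rather than operator convexity, to suffice.
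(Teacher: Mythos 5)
Your argument is correct and complete for the statement as it appears here, but it is genuinely different from what the paper does: the paper offers no proof at all, simply citing Aujla and Silva \cite{aujla}, where the inequality is established for \emph{every} unitarily invariant norm via a weak-majorization argument (Ky Fan's maximum principle applied to the eigenvalue sequences of $f\bigl(\tfrac{A+B}{2}\bigr)$ and $\tfrac{f(A)+f(B)}{2}$). Your observation that the operator-norm case collapses onto a single (approximate) spectral vector is exactly right, and the resulting chain $f(\langle Mu,u\rangle)=f\bigl(\tfrac{\alpha+\beta}{2}\bigr)\le\tfrac{f(\alpha)+f(\beta)}{2}\le\langle Nu,u\rangle\le\|N\|$, combined with $\|f(M)\|=\sup_{\lambda\in\sigma(M)}f(\lambda)$ and the fact that every spectral point of a self-adjoint operator is an approximate eigenvalue, gives a clean, self-contained, and elementary proof that uses nothing beyond the scalar Jensen inequality for vector states (the paper's Lemma \ref{jenson}); your remark that genuine operator convexity is \emph{not} needed, and would be needed for the na\"{\i}ve $f(V^{*}XV)\le V^{*}f(X)V$ route, is the key conceptual point and is well taken. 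What your approach gives up is the strength of the cited theorem: it cannot yield the inequality for Schatten or Ky Fan norms, since those depend on more of the spectrum than its extreme point; what it buys is independence from majorization machinery and validity in infinite dimensions with only the approximate-eigenvector device you describe. The only loose end is the tacit use of continuity of $f$ when passing $\langle Mu_{n},u_{n}\rangle\to\lambda^{*}$ through $f$ (a convex function on $[0,\infty)$ is automatically continuous on $(0,\infty)$ but could in principle jump up at $0$); since the functional calculus defining $f(M)$ already presupposes continuity, this costs nothing, but it is worth a sentence.
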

	The next result is a special case of mixed Schwarz inequality.
	\begin{lemma}\label{mixed schwarz} \cite{Res}
		Let $T \in \mathcal{B}(\mathcal{H})$. Then \[|\langle Tx, y \rangle|^2 \leq \langle |T|x, x \rangle \langle |T^*|y, y \rangle~~\forall ~x, y \in \mathcal{H} .\]
	\end{lemma}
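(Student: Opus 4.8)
The plan is to derive this special case of the mixed Schwarz inequality from the polar decomposition of $T$ together with the ordinary Cauchy--Schwarz inequality in $\mathcal{H}$. First I would write $T = U|T|$, where $U$ is the partial isometry whose initial space is $\overline{\operatorname{ran}|T|}$ and whose final space is $\overline{\operatorname{ran}T}$. Splitting $|T| = |T|^{1/2}|T|^{1/2}$ and moving one factor across the inner product, I would rewrite
\[
\langle Tx, y\rangle = \langle U|T|^{1/2}|T|^{1/2}x, y\rangle = \langle |T|^{1/2}x,\, |T|^{1/2}U^*y\rangle,
\]
using that $|T|^{1/2}$ is self-adjoint. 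This recasts the left-hand side as a genuine inner product of the two vectors $|T|^{1/2}x$ and $|T|^{1/2}U^*y$.

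Next I would apply the Cauchy--Schwarz inequality to this inner product, obtaining
\[
|\langle Tx, y\rangle|^2 \le \langle |T|^{1/2}x,\, |T|^{1/2}x\rangle\,\langle |T|^{1/2}U^*y,\, |T|^{1/2}U^*y\rangle = \langle |T|x, x\rangle\,\langle U|T|U^*y, y\rangle,
\]
where the first factor is immediate from self-adjointness of $|T|^{1/2}$ and the second is obtained by sliding the operators back across the inner product.

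The crux is then to identify $U|T|U^*$ with $|T^*|$. I would verify this directly from the polar decomposition: since $U^*U$ is the orthogonal projection onto $\overline{\operatorname{ran}|T|}$ and $|T|$ already maps into this subspace, we have $U^*U|T| = |T|$, whence
\[
(U|T|U^*)^2 = U|T|(U^*U)|T|U^* = U|T|^2U^* = U|T|\,|T|U^* = TT^*.
\]
As $U|T|U^*$ is positive, it must coincide with the unique positive square root $(TT^*)^{1/2} = |T^*|$. Substituting $\langle U|T|U^*y, y\rangle = \langle |T^*|y, y\rangle$ completes the argument. I expect this identification of $|T^*|$ to be the only delicate point, since it hinges on the precise mapping properties of the partial isometry $U$ in the polar decomposition; once that is in hand, everything else is a direct application of Cauchy--Schwarz and self-adjointness of $|T|^{1/2}$.
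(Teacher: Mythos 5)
Your proof is correct: the factorization $\langle Tx,y\rangle=\langle |T|^{1/2}x,\,|T|^{1/2}U^*y\rangle$, the application of Cauchy--Schwarz, and the identification $U|T|U^*=|T^*|$ (via positivity and $(U|T|U^*)^2=TT^*$) are all carried out accurately. The paper states this lemma without proof, citing Kittaneh's article, and your argument via the polar decomposition is precisely the standard one given in that reference for the $\alpha=\tfrac12$ case of the mixed Schwarz inequality, so there is nothing to add.
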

Our following result is the famous Buzano generalization of Cauchy-Schwarz inequality.
\begin{lemma}\label{buzano}\cite{Buzano}
	Let $x, y, e \in \mathcal{H}$ with $\|e\|=1.$ Then we have \[|\langle x, e\rangle \langle e , y\rangle| \leq \frac{1}{2}\left(\|x\|\|y\| + |\langle x, y \rangle| \right) .\]
\end{lemma}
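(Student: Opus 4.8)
The plan is to reduce the inequality to an application of the ordinary Cauchy--Schwarz inequality through the rank-one orthogonal projection onto the line spanned by $e$. Since $\|e\| = 1$, let $P$ denote the projection defined by $Pz = \langle z, e\rangle e$ for $z \in \mathcal{H}$. The first observation I would record is that the left-hand quantity is exactly a single inner product involving $P$: writing $\langle x, e\rangle \langle e, y\rangle = \langle \langle x, e\rangle e, y\rangle = \langle Px, y\rangle$, so that the claim becomes $2|\langle Px, y\rangle| \leq \|x\|\|y\| + |\langle x, y\rangle|$.

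The key idea is to introduce the reflection $U = 2P - I$. Since $P$ is an orthogonal projection, $P = P^* = P^2$, and a direct computation gives $U^* = U$ together with $U^*U = (2P-I)^2 = 4P^2 - 4P + I = I$ (using $P^2 = P$), so $U$ is a self-adjoint unitary and in particular $\|U\| = 1$. I would then write $2\langle Px, y\rangle = \langle Ux, y\rangle + \langle x, y\rangle$ and estimate the first term by the Cauchy--Schwarz inequality together with $\|U\| = 1$, namely $|\langle Ux, y\rangle| \leq \|Ux\|\|y\| \leq \|x\|\|y\|$.

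Combining these via the triangle inequality finishes the argument: $2|\langle Px, y\rangle| = |\langle Ux, y\rangle + \langle x, y\rangle| \leq |\langle Ux, y\rangle| + |\langle x, y\rangle| \leq \|x\|\|y\| + |\langle x, y\rangle|$, which is precisely the asserted bound after translating back through $\langle Px, y\rangle = \langle x, e\rangle\langle e, y\rangle$.

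The main obstacle is conceptual rather than computational: one must recognize that the left-hand side is governed by the projection $P$ and that the norm-one reflection $2P - I$ is exactly the device that produces the averaging factor $\tfrac{1}{2}$ alongside the two terms $\|x\|\|y\|$ and $|\langle x, y\rangle|$. Once that operator is identified, every remaining step is routine. As an alternative route I could instead decompose $x = \langle x, e\rangle e + x'$ and $y = \langle y, e\rangle e + y'$ with $x', y' \perp e$, and then add an upper bound for $\|x\|\|y\|$ to a lower bound for $|\langle x, y\rangle|$ so that the unwanted cross term $\|x'\|\|y'\|$ cancels; the same factor of $2$ emerges, though the projection argument is cleaner.
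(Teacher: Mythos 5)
Your proof is correct and complete. The identity $\langle x,e\rangle\langle e,y\rangle=\langle Px,y\rangle$ for the rank-one projection $P z=\langle z,e\rangle e$, the fact that $U=2P-I$ is a self-adjoint unitary, and the split $2\langle Px,y\rangle=\langle Ux,y\rangle+\langle x,y\rangle$ followed by Cauchy--Schwarz all check out; this is the standard clean ``reflection'' proof of Buzano's inequality. There is nothing in the paper to compare it against: the author states the lemma with only a citation to Buzano's original article and supplies no proof, so your argument fills a gap the paper leaves to the reference.
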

Recently Dolat et.al in \cite{filomat} proved a new improvement of Cauchy-Schwarz inequality. Our next lemma is the following:
\begin{lemma}\label{cauchyimp}
	Let $x, y \in \mathcal{H}.$ Then for any $\lambda \geq 0,$ we have \[|\langle x, y \rangle|^2 \leq \frac{1}{\lambda+1} \|x\|\|y\||\langle x, y\rangle| + \frac{\lambda}{\lambda + 1}\|x\|^2\|y\|^2 \leq \|x\|^2\|y\|^2. \]
\end{lemma}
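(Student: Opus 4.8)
The plan is to reduce the entire statement to the classical Cauchy--Schwarz inequality together with an elementary one-variable estimate. Introduce the shorthand $a = |\langle x, y \rangle|$ and $b = \|x\|\,\|y\|$, so that $a, b \geq 0$ and the classical Cauchy--Schwarz inequality reads $a \leq b$. In this notation the claim becomes the purely algebraic chain
\[
a^2 \leq \frac{1}{\lambda+1}\,ab + \frac{\lambda}{\lambda+1}\,b^2 \leq b^2,
\]
valid for all real $a, b$ with $0 \leq a \leq b$ and all $\lambda \geq 0$. Thus the operator-theoretic content is exhausted by Cauchy--Schwarz, and everything remaining is an inequality between nonnegative reals.

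For the left-hand inequality I would clear the denominator $\lambda+1 > 0$ and rearrange into $(\lambda+1)a^2 - ab - \lambda b^2 \leq 0$. The key step is to factor the left side; viewing it as a quadratic in $a$ one checks that it factors as
\[
(\lambda+1)a^2 - ab - \lambda b^2 = (a - b)\bigl((\lambda+1)a + \lambda b\bigr).
\]
Now the second factor is nonnegative because $a, b \geq 0$ and $\lambda \geq 0$, while the first factor satisfies $a - b \leq 0$ precisely by Cauchy--Schwarz. Hence the product is $\leq 0$, which is the desired left inequality.

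For the right-hand inequality I would again clear $\lambda+1$ to obtain the equivalent statement $ab + \lambda b^2 \leq (\lambda+1)b^2$, i.e.\ $ab \leq b^2$. This is immediate from $a \leq b$ (handling the trivial case $b = 0$ separately, where both sides vanish). I do not anticipate a genuine obstacle here; the only mildly non-obvious point is spotting the factorization above, after which both inequalities collapse to Cauchy--Schwarz. One should simply be careful to justify the sign of each factor and to record that the substitution $a = |\langle x, y\rangle|$, $b = \|x\|\|y\|$ makes $a \le b$ exactly the Cauchy--Schwarz inequality being sharpened.
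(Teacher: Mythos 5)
Your proof is correct: the factorization $(\lambda+1)a^2 - ab - \lambda b^2 = (a-b)\bigl((\lambda+1)a + \lambda b\bigr)$ checks out, the sign analysis of the two factors is right, and the second inequality does reduce to $ab \le b^2$. Note, however, that the paper gives no proof of this lemma at all --- it is quoted from Al-Dolat and Jaradat \cite{filomat} --- so there is nothing internal to compare against; your argument serves as a valid self-contained verification. For what it is worth, the more direct route (and the one usually given) is to write $|\langle x,y\rangle|^2$ as the convex combination $\frac{1}{\lambda+1}|\langle x,y\rangle|^2 + \frac{\lambda}{\lambda+1}|\langle x,y\rangle|^2$ and apply Cauchy--Schwarz to each summand separately, via $a^2 \le ab$ for the first term and $a^2 \le b^2$ for the second; this yields both inequalities in one line and avoids having to spot the factorization. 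Either way the entire content is the classical Cauchy--Schwarz inequality, exactly as you observe.
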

Our next lemma is the operator version of classical Jensen;s inequality.
\begin{lemma}\label{jenson} \cite{Res}
	Let $T \in \mathcal{B}(\mathcal{H})$ be a self-adjoint operator whose spectrum contained in the interval $J$, and let $x \in \mathcal{H}$ be an unit vector. If $f$ be a convex function on $J$, then \[f(\langle Tx, x \rangle) \leq \langle f(T)x, x \rangle. \]
\end{lemma}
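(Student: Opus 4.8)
The plan is to use the supporting-line (subgradient) characterization of convexity together with the continuous functional calculus. Write $t_0 = \langle Tx, x \rangle$. Since $T$ is self-adjoint with $\sigma(T) \subseteq J$, the scalar $t_0$ lies in the interval $[\inf \sigma(T), \sup \sigma(T)]$, the closed convex hull of $\sigma(T)$, and hence in $J$; this is what lets me speak meaningfully of the behaviour of $f$ at $t_0$.

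First I would record the elementary scalar fact that a convex function lies above each of its supporting lines: because $f$ is convex on $J$ and $t_0 \in J$, there is a real constant $c$ (a subgradient of $f$ at $t_0$) with \[ f(s) \geq f(t_0) + c(s - t_0) \qquad \text{for all } s \in J. \] Next I would feed this scalar inequality into the functional calculus. As $\sigma(T) \subseteq J$, the function $g(s) = f(s) - f(t_0) - c(s - t_0)$ is non-negative on $\sigma(T)$, so the self-adjoint operator $g(T) = f(T) - f(t_0)I - c(T - t_0 I)$ is positive. Pairing against the unit vector $x$ then yields \[ \langle f(T)x, x \rangle - f(t_0) - c\left(\langle Tx, x \rangle - t_0\right) \geq 0. \]

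Finally, since $\langle Tx, x \rangle = t_0$ by the very definition of $t_0$, the term carrying $c$ vanishes and I am left with $\langle f(T)x, x \rangle \geq f(t_0) = f(\langle Tx, x \rangle)$, which is exactly the assertion. An equivalent route invokes the spectral theorem to realise $\mu_x(\cdot) = \langle E(\cdot)x, x \rangle$ as a probability measure on $J$ (its total mass is $\langle x, x \rangle = 1$), so that $t_0 = \int_J s \, d\mu_x(s)$ and $\langle f(T)x, x \rangle = \int_J f(s)\, d\mu_x(s)$, after which one simply applies the classical integral form of Jensen's inequality.

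The step I expect to be the main obstacle is justifying the existence of the supporting line at $t_0$, which is tied to confirming $t_0 \in J$. When $t_0$ is interior to $J$ this is automatic, since convexity guarantees finite one-sided derivatives and hence a genuine subgradient there. The delicate case is when $t_0$ coincides with an endpoint of $J$: then the average $t_0 = \int_J s \, d\mu_x(s)$ of values in $J$ can equal that endpoint only if $\mu_x$ is concentrated there, which forces $Tx = t_0 x$ and collapses both sides of the inequality to $f(t_0)$, so the claim is trivially true. Handling this boundary case cleanly, and checking that $g$ is genuinely non-negative precisely on $\sigma(T)$ before invoking the functional calculus, is the only real care the argument demands.
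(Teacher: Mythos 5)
Your argument is correct. Note first that the paper itself offers no proof of this lemma: it is quoted as a known result with a citation to the reference \cite{Res}, so there is no in-paper argument to compare against. What you give is the standard supporting-line proof of the single-vector operator Jensen inequality, and it is sound: $t_{0}=\langle Tx,x\rangle$ lies in the closed convex hull of $\sigma(T)$, hence in the interval $J$; the subgradient inequality $f(s)\geq f(t_{0})+c(s-t_{0})$ on $J$ passes through the functional calculus to the operator inequality $f(T)\geq f(t_{0})I+c(T-t_{0}I)$ because the spectrum of $T$ lies where the scalar inequality holds; and pairing with the unit vector $x$ kills the linear term. You are also right to flag the endpoint case as the only delicate point, and your resolution is correct: if $t_{0}$ is an endpoint of $J$ it must equal $\min\sigma(T)$ or $\max\sigma(T)$, and the spectral measure $\mu_{x}$, being a probability measure on $\sigma(T)$ with barycentre at that extreme point, is the Dirac mass there, so $x$ is an eigenvector and both sides reduce to $f(t_{0})$. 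The alternative route you sketch, writing both sides as integrals against $\mu_{x}(\cdot)=\langle E(\cdot)x,x\rangle$ and invoking classical Jensen, is equally valid and is essentially the textbook proof. The only minor point worth recording is that one implicitly needs $f(T)$ to be well defined, e.g.\ $f$ continuous on $\sigma(T)$ or at least Borel measurable, which a convex function on an interval always is; this is harmless here.
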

Our last lemma on this sequence is the following:
\begin{lemma} \label{convex}
	If $f: [0, a] \rightarrow [0, \infty)$, $a>0$ be an increasing convex function with $f(0) =0$ and $\alpha \in [0,1].$ Then \[f(\alpha x) \leq \alpha f(x) .\]
\end{lemma}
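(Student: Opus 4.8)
The plan is to apply the definition of convexity directly, exploiting the normalization $f(0) = 0$. The key observation is that for any $x \in [0,a]$ and any $\alpha \in [0,1]$, the point $\alpha x$ can be written as a genuine convex combination of the two endpoints $x$ and $0$, namely
\[
\alpha x = \alpha \cdot x + (1-\alpha)\cdot 0 .
\]
Since both $x$ and $0$ lie in the domain $[0,a]$ and $\alpha x \in [0,x] \subseteq [0,a]$, the value $f(\alpha x)$ is well defined and the convexity inequality is applicable at these points.

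First I would invoke convexity of $f$ at this combination to obtain
\[
f(\alpha x) = f\bigl(\alpha x + (1-\alpha)\cdot 0\bigr) \leq \alpha\, f(x) + (1-\alpha)\, f(0).
\]
Then, substituting the hypothesis $f(0) = 0$, the second term vanishes and the asserted bound $f(\alpha x) \leq \alpha f(x)$ drops out at once.

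There is essentially no real obstacle here: the statement is a one-line consequence of convexity together with the vanishing of $f$ at the origin. It is worth remarking that the monotonicity and nonnegativity hypotheses on $f$ are not actually needed for \emph{this} particular inequality --- convexity and $f(0)=0$ alone suffice. These extra assumptions are presumably retained because the lemma is tailored for its later applications, where $f$ will typically be a power map $t \mapsto t^{r}$ with $r \geq 1$, which is simultaneously increasing, convex, nonnegative, and zero at the origin.
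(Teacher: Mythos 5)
Your proof is correct and is the standard one‑line argument: writing $\alpha x = \alpha x + (1-\alpha)\cdot 0$, applying convexity, and using $f(0)=0$. The paper states this lemma without any proof, so there is nothing to diverge from; your observation that monotonicity and nonnegativity are not needed for this inequality is also accurate.
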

To prove our first theorem of the article we need the following consequence of the Buzano inequality.

\begin{lemma}\label{gen buzano}
	Let $x, y, e \in \mathcal{H}$ with $\|e\|=1$, then with $\lambda \geq 0$ we have, 
	\[|\langle x, e \rangle \langle e, y \rangle|^2 \leq \frac{1}{4} \left(\frac{2\lambda +3}{\lambda +1}\|x\|\|y\||\langle x, y \rangle| + \frac{2\lambda +1}{\lambda +1} \|x\|^2\|y\|^2 \right). \]
\end{lemma}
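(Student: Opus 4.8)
The plan is to derive this from the two Cauchy--Schwarz-type refinements already recorded: the classical Buzano inequality (Lemma \ref{buzano}) and the one-parameter improvement of Lemma \ref{cauchyimp}. First I would apply Buzano, writing $|\langle x, e\rangle\langle e, y\rangle| \le \tfrac12\bigl(\|x\|\|y\| + |\langle x,y\rangle|\bigr)$, and then square both sides. Expanding the square produces the three terms $\|x\|^2\|y\|^2$, $2\|x\|\|y\||\langle x,y\rangle|$, and $|\langle x,y\rangle|^2$, all divided by $4$.

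The decisive step is how one treats the remaining quadratic term $|\langle x,y\rangle|^2$. Replacing it by the plain Cauchy--Schwarz bound $\|x\|^2\|y\|^2$ would only reproduce the square of Buzano. Instead I would invoke Lemma \ref{cauchyimp}, which for each $\lambda \ge 0$ gives $|\langle x,y\rangle|^2 \le \tfrac{1}{\lambda+1}\|x\|\|y\||\langle x,y\rangle| + \tfrac{\lambda}{\lambda+1}\|x\|^2\|y\|^2$. It is precisely this substitution that introduces the parameter $\lambda$ and keeps part of the term $|\langle x,y\rangle|$ visible in the final bound, rather than collapsing everything into $\|x\|^2\|y\|^2$.

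It then remains only to collect like terms inside the factor $\tfrac14$. The coefficient of $\|x\|\|y\||\langle x,y\rangle|$ becomes $2 + \tfrac{1}{\lambda+1} = \tfrac{2\lambda+3}{\lambda+1}$, while the coefficient of $\|x\|^2\|y\|^2$ becomes $1 + \tfrac{\lambda}{\lambda+1} = \tfrac{2\lambda+1}{\lambda+1}$, which is exactly the asserted inequality.

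Since each ingredient is a lemma already in hand and the remaining work is elementary algebra, there is no real obstacle; the only genuine decision is to refine the squared cross term through Lemma \ref{cauchyimp} rather than ordinary Cauchy--Schwarz. As a consistency check, letting $\lambda \to \infty$ recovers the bound obtained from plain Cauchy--Schwarz (both coefficients tend to $2$), while $\lambda = 0$ yields the tightest member $\tfrac14\bigl(3\|x\|\|y\||\langle x,y\rangle| + \|x\|^2\|y\|^2\bigr)$ of the family.
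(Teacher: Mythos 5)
Your proposal is correct and follows exactly the paper's own argument: apply Buzano's inequality, square and expand, replace the term $|\langle x,y\rangle|^2$ via Lemma \ref{cauchyimp}, and collect coefficients. Nothing to add.
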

\begin{proof}
	Using Buzano generalization of Cauchy Schwarz inequality we get, 
	\begin{eqnarray*}
	|\langle x, e \rangle \langle e, y \rangle|^2 &\leq& \frac{1}{4} \left( \|x\|^2\|y\|^2 + 2 |\langle x, y \rangle| \|x\|\|y\| + |\langle x, y \rangle|^2\right)\\
	&\leq& \frac{1}{4}\left(\|x\|^2\|y\|^2 + 2 |\langle x, y \rangle| \|x\|\|y\| + \frac{1}{\lambda +1} \|x\|\|y\||\langle x, y \rangle|\right)\\&& + \frac{\lambda}{4(\lambda +1)} \|x\|^2\|y\|^2~~~(\mbox{using Lemma \ref{cauchyimp}}) \\
	&=&  \frac{1}{4} \left(\frac{2\lambda +3}{\lambda +1}\|x\|\|y\||\langle x, y \rangle| + \frac{2\lambda +1}{\lambda +1} \|x\|^2\|y\|^2 \right).
	\end{eqnarray*}
\end{proof}
Now we are ready to prove our first theorem of the article. 
\begin{theorem}\label{th2}
	Let $T \in \mathcal{B}(\mathcal{H})$ and $\lambda \geq0.$ Then \[w^4(T) \leq \frac{2\lambda +3}{8(\lambda + 1)} \left\||T|^2 + |T^*|^2\right\|w(T^2) + \frac{2\lambda +1}{8(\lambda +1)}\left\||T|^4 + |T^*|^4\right\|. \] 
\end{theorem}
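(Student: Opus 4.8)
The plan is to feed the refined Buzano inequality of Lemma \ref{gen buzano} the two vectors obtained by letting $T$ and $T^*$ act on a fixed unit vector, and then translate the resulting scalars back into operator data. Concretely, fix a unit vector $x \in \mathcal{H}$ and apply Lemma \ref{gen buzano} with $e = x$ and the substitutions $x \mapsto Tx$ and $y \mapsto T^*x$. The crucial observation is the identity $\langle x, T^*x\rangle = \langle Tx, x\rangle$, so that the left-hand side collapses to $|\langle Tx, x\rangle\,\langle x, T^*x\rangle|^2 = |\langle Tx, x\rangle|^4$, which is exactly the fourth power we are chasing. On the right-hand side the cross term produces $\langle Tx, T^*x\rangle = \langle T^2 x, x\rangle$, and this is precisely where the factor $w(T^2)$ will originate.

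After this substitution the inequality reads
\begin{equation*}
|\langle Tx, x\rangle|^4 \leq \frac{1}{4}\left(\frac{2\lambda+3}{\lambda+1}\|Tx\|\,\|T^*x\|\,|\langle T^2 x, x\rangle| + \frac{2\lambda+1}{\lambda+1}\|Tx\|^2\|T^*x\|^2\right).
\end{equation*}
Next I would rewrite the norms via $\|Tx\|^2 = \langle|T|^2 x, x\rangle$ and $\|T^*x\|^2 = \langle|T^*|^2 x, x\rangle$. For the first term, the arithmetic--geometric mean inequality (\ref{eq am-gm}) gives $\|Tx\|\,\|T^*x\| = \langle|T|^2 x,x\rangle^{1/2}\langle|T^*|^2 x,x\rangle^{1/2} \leq \frac{1}{2}\langle(|T|^2+|T^*|^2)x,x\rangle$. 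For the second term, applying the same mean inequality to the product and then Lemma \ref{positive op} with $r=2$ to the positive operators $|T|^2$ and $|T^*|^2$ yields $\|Tx\|^2\|T^*x\|^2 \leq \frac{1}{2}\big(\langle|T|^2x,x\rangle^2+\langle|T^*|^2x,x\rangle^2\big) \leq \frac{1}{2}\langle(|T|^4+|T^*|^4)x,x\rangle$.

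Substituting these two estimates produces, for every unit vector $x$, a bound whose right-hand side is a combination of $\langle(|T|^2+|T^*|^2)x,x\rangle\,|\langle T^2x,x\rangle|$ and $\langle(|T|^4+|T^*|^4)x,x\rangle$. Since $|T|^2+|T^*|^2$ and $|T|^4+|T^*|^4$ are positive operators, their diagonal values are dominated by their norms, while $|\langle T^2x,x\rangle| \leq w(T^2)$; bounding each factor by its supremum decouples the product into $\||T|^2+|T^*|^2\|\,w(T^2)$ and replaces the last term by $\||T|^4+|T^*|^4\|$. Taking the supremum over all unit vectors $x$ then turns the left side into $w^4(T)$ and delivers the claimed inequality. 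I expect the only delicate point to be the initial substitution: one must verify that the single choice $e=x$, $x\mapsto Tx$, $y\mapsto T^*x$ simultaneously makes the left side a pure fourth power \emph{and} forces the cross term to be exactly $\langle T^2x,x\rangle$. Everything afterward is the standard arithmetic--geometric/Jensen reduction, and the concluding separation of the product factors, though lossy, is exactly what yields the stated form.
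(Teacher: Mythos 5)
Your proposal is correct and follows essentially the same route as the paper: the identical substitution $x\mapsto Tx$, $y\mapsto T^*x$, $e\mapsto x$ in Lemma \ref{gen buzano}, followed by the arithmetic--geometric mean inequality, the McCarthy-type estimate of Lemma \ref{positive op} to reach $\langle(|T|^4+|T^*|^4)x,x\rangle$, and the final passage to norms and $w(T^2)$ before taking the supremum. No gaps.
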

\begin{proof}
	Let $x \in \mathcal{H}$ with $\|x\| =1$. Replacing $x$ by $Tx$, $y$ by $T^*x$ and $e$ by $x$ in\\ Lemma \ref{gen buzano} we get,
	\begin{eqnarray*}
	|\langle Tx, x \rangle|^4 &\leq& \frac{1}{4}\left( \frac{2\lambda +3}{\lambda +1} \|Tx\|\|T^*x\|\left|\langle T^2x, x \rangle\right| +\frac{2\lambda +1}{\lambda +1} \|Tx\|^2\|T^*x\|^2   \right)\\
	&\leq&  \frac{2\lambda +3}{8(\lambda +1)}\left(\|Tx\|^2 + \|T^*x\|^2\right) |\langle T^2x, x \rangle| +\frac{2\lambda +1}{8(\lambda +1)}\left(\|Tx\|^4 + \|T^*x\|^4 \right)  \\
	&\leq& \frac{2\lambda +3}{8(\lambda +1)} \left\langle (|T|^2 + |T^*|^2)x, x \right\rangle |\langle T^2x, x \rangle| + \frac{2\lambda +1}{8(\lambda +1)} \left\langle(|T|^4 + |T^*|^4)x, x  \right\rangle \\
	&\leq&  \frac{2\lambda +3}{8(\lambda + 1)} \left\||T|^2 + |T^*|^2\right\|w(T^2) + \frac{2\lambda +1}{8(\lambda +1)}\left\||T|^4 + |T^*|^4\right\|.
	\end{eqnarray*}
By taking supremum over all unit vectors $x \in \mathcal{H}$, we get our required inequality.
\end{proof}
\begin{cor}\label{imp dragomir}
	Let $T \in \mathcal{B}(\mathcal{H})$ and $\lambda \geq 0.$ Then 
	\begin{eqnarray*}
	w^4(T) &\leq& \frac{2\lambda +3}{8(\lambda + 1)} \left\||T|^2 + |T^*|^2\right\|w(T^2) + \frac{2\lambda +1}{8(\lambda +1)}\left\||T|^4 + |T^*|^4\right\|\\
	&\leq& \frac{1}{2} \left\||T|^4 + |T^*|^4 \right\|.
	\end{eqnarray*}
\end{cor}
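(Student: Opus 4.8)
The first inequality is precisely the content of Theorem \ref{th2}, so the whole task reduces to establishing the second inequality,
\[
\frac{2\lambda +3}{8(\lambda + 1)} \left\||T|^2 + |T^*|^2\right\|w(T^2) + \frac{2\lambda +1}{8(\lambda +1)}\left\||T|^4 + |T^*|^4\right\| \leq \frac{1}{2} \left\||T|^4 + |T^*|^4 \right\|.
\]
Writing $A = |T|^2$ and $B = |T^*|^2$, I would first note the algebraic identity $\frac{1}{2} - \frac{2\lambda+1}{8(\lambda+1)} = \frac{2\lambda+3}{8(\lambda+1)}$. Moving the second summand to the right and dividing by the common positive factor $\frac{2\lambda+3}{8(\lambda+1)}$ shows that the target is \emph{equivalent} to the single $\lambda$-free estimate
\[
\left\|A + B\right\| \, w(T^2) \leq \left\|A^2 + B^2\right\|.
\]
So the plan is to isolate and prove this clean inequality, and then read the corollary off Theorem \ref{th2}.

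To prove $\|A+B\|\,w(T^2) \leq \|A^2+B^2\|$, I would bound $w(T^2)$ sharply. For a unit vector $x$, writing $\langle T^2 x, x\rangle = \langle Tx, T^*x\rangle$ and applying the Cauchy--Schwarz inequality followed by the arithmetic--geometric mean inequality (\ref{eq am-gm}) gives
\[
\left|\langle T^2 x, x\rangle\right| \leq \|Tx\|\,\|T^*x\| \leq \tfrac{1}{2}\bigl(\|Tx\|^2 + \|T^*x\|^2\bigr) = \tfrac{1}{2}\bigl\langle (A+B)x, x\bigr\rangle \leq \tfrac{1}{2}\|A+B\|.
\]
Taking the supremum over unit vectors yields $w(T^2) \leq \tfrac{1}{2}\|A+B\|$. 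Since $A+B$ is positive, $\|A+B\|^2 = \|(A+B)^2\|$, and the case $r=2$ of Lemma \ref{convex op} gives $\|(A+B)^2\| \leq 2\|A^2+B^2\|$. Combining these,
\[
\|A+B\|\,w(T^2) \leq \tfrac{1}{2}\|A+B\|^2 \leq \tfrac{1}{2}\cdot 2\,\|A^2+B^2\| = \|A^2+B^2\|,
\]
which is exactly the required key inequality.

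Substituting $\|A+B\|\,w(T^2) \leq \|A^2+B^2\|$ into the bound of Theorem \ref{th2} turns the two summands into $\frac{2\lambda+3}{8(\lambda+1)}\|A^2+B^2\|$ and $\frac{2\lambda+1}{8(\lambda+1)}\|A^2+B^2\|$, whose coefficients add to $\frac{(2\lambda+3)+(2\lambda+1)}{8(\lambda+1)} = \frac{4(\lambda+1)}{8(\lambda+1)} = \frac{1}{2}$, giving precisely $\frac{1}{2}\|A^2+B^2\|$. The one point requiring care is the sharpness of the bound on $w(T^2)$: the crude estimate $w(T^2)\le \|A+B\|$ (valid via $w(T^2)\le\|T\|^2=\|A\|\le\|A+B\|$) already loses a factor of two, making the coefficients sum to $\frac{6\lambda+7}{8(\lambda+1)}>\frac12$ and breaking the argument. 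Thus the proof genuinely needs the AM--GM refinement $w(T^2)\le\tfrac12\|A+B\|$ in tandem with Lemma \ref{convex op}. That both inequalities become equalities when $T$ is a scalar multiple of the identity confirms that no slack remains and that this is the correct route.
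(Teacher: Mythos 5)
Your proof is correct and follows essentially the same route as the paper: both arguments rest on the bound $w(T^2)\le\tfrac12\left\||T|^2+|T^*|^2\right\|$ combined with Lemma \ref{convex op} applied to $\left(|T|^2+|T^*|^2\right)^2$, after which the coefficients sum to $\tfrac12$. The only (cosmetic) difference is that you derive the bound on $w(T^2)$ directly from Cauchy--Schwarz and AM--GM, whereas the paper cites it as the case $S=T^*$, $r=1$ of inequality (\ref{eq dragomir}).
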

\begin{proof}
	By Theorem \ref{th2} we have 
	\begin{eqnarray*}
	w^4(T) &\leq& \frac{2\lambda +3}{8(\lambda + 1)} \left\||T|^2 + |T^*|^2\right\|w(T^2) + \frac{2\lambda +1}{8(\lambda +1)}\left\||T|^4 + |T^*|^4\right\|\\
	&\leq&  \frac{2\lambda +3}{16(\lambda + 1)} \left\||T|^2 + |T^*|^2\right\| ^2 + \frac{2\lambda +1}{8(\lambda +1)}\left\||T|^4 + |T^*|^4\right\|, (\mbox{using inequality (\ref{eq dragomir})})\\
	&=& \frac{2\lambda +3}{16(\lambda + 1)} \left\|\left(|T|^2 + |T^*|^2\right)^2\right\|  + \frac{2\lambda +1}{8(\lambda +1)}\left\||T|^4 + |T^*|^4\right\|\\
	&\leq& \frac{2\lambda +3}{8(\lambda + 1)} \left\||T|^4 + |T^*|^4\right\|  + \frac{2\lambda +1}{8(\lambda +1)}\left\||T|^4 + |T^*|^4\right\|~(\mbox{using Lemma \ref{convex op}})\\
	&=& \frac{1}{2} \left\||T|^4 + |T^*|^4 \right\|.
	\end{eqnarray*}
So our obtained inequality in Theorem \ref{th2} gives an improvement of the inequality (\ref{eq dragomir}) for $r=2.$
\end{proof}
If we take $\lambda =1$ in Theorem \ref{th2} we get the following corollary which is a refinement of \cite[Th. 2.1]{omidvar}
\begin{cor}  \cite[Remark 3.4]{klaa}
	Let $T \in \mathcal{B}(\mathcal{H})$, then 	
	\begin{eqnarray*}
	w^4(T) &\leq& \frac{5}{16} \left\||T|^2 + |T^*|^2 \right\|w(T^2) + \frac{3}{16} \left\||T|^4 + |T^*|^4 \right\| \\
	&\leq&   \frac{1}{8} \left\||T|^2 + |T^*|^2 \right\|w(T^2) + \frac{3}{8} \left\||T|^4 + |T^*|^4 \right\|\\
	&\leq& \frac{1}{2} \left\||T|^4 + |T^*|^4 \right\|.
	\end{eqnarray*}
\end{cor}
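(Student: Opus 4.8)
The top line is simply Theorem \ref{th2} evaluated at $\lambda = 1$, since $\frac{2\lambda+3}{8(\lambda+1)}$ and $\frac{2\lambda+1}{8(\lambda+1)}$ become $\frac{5}{16}$ and $\frac{3}{16}$ respectively. So the substance lies in the two remaining inequalities, and my plan is to reduce both of them to the single scalar comparison $\||T|^2 + |T^*|^2\|\,w(T^2) \le \||T|^4 + |T^*|^4\|$. Writing $A := \||T|^2 + |T^*|^2\|\,w(T^2)$ and $B := \||T|^4 + |T^*|^4\|$, the three right-hand sides are $\frac{5}{16}A + \frac{3}{16}B$, $\frac18 A + \frac38 B$ and $\frac12 B$; the first two differ by $\frac{3}{16}(B-A)$ and the last two by $\frac18(B-A)$. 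Hence once $A \le B$ is established, both inequalities are immediate by comparing coefficients, with no further operator theory required.

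To prove $A \le B$ I would first bound $w(T^2)$ by means of Dragomir's inequality (\ref{eq dragomir}). Taking $r = 1$ there and replacing $S$ by $T^*$ (so that $S^*T = T^2$ and $|S|^2 = |T^*|^2$) gives $w(T^2) \le \tfrac12\||T|^2 + |T^*|^2\|$, whence $A \le \tfrac12\||T|^2 + |T^*|^2\|^2$. Since $|T|^2 + |T^*|^2$ is a positive operator, $\||T|^2 + |T^*|^2\|^2 = \|(|T|^2 + |T^*|^2)^2\|$, so it suffices to control the norm of this square.

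For that final step I would apply Lemma \ref{convex op} to the positive operators $|T|^2$ and $|T^*|^2$ with the convex power $r = 2$, which yields
\[
\left\|\left(\frac{|T|^2+|T^*|^2}{2}\right)^2\right\| \le \left\|\frac{|T|^4 + |T^*|^4}{2}\right\|;
\]
multiplying through by $4$ gives $\|(|T|^2 + |T^*|^2)^2\| \le 2\,\||T|^4 + |T^*|^4\|$. Combined with the previous estimate this produces $A \le \tfrac12\cdot 2B = B$, which finishes the argument. I do not anticipate a genuine obstacle here; the only points needing care are recognizing that both displayed inequalities collapse to the same fact $A \le B$, and applying the power--convexity norm bound of Lemma \ref{convex op} to the correct operators with the correct exponent.
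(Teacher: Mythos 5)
Your proposal is correct and follows essentially the same route as the paper: the first line is Theorem \ref{th2} at $\lambda=1$, and the remaining two inequalities reduce to $\left\||T|^2+|T^*|^2\right\|w(T^2)\leq\left\||T|^4+|T^*|^4\right\|$, which you establish via inequality (\ref{eq dragomir}) and Lemma \ref{convex op} exactly as the paper does in the proof of Corollary \ref{imp dragomir}. Your observation that both displayed inequalities collapse to the single comparison $A\leq B$ is a clean packaging of that same argument.
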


Next we prove the following lemma the proof of which follows from Lemma \ref{positive op} and Lemma \ref{buzano} .
\begin{lemma} \label{dolat 23}
	Let $x, y, e \in \mathcal{H}$ with $\|e\|=1.$ Then for $\alpha \in [0,1]$ and $r \geq 1, $ we have 
	\[|\langle x, e \rangle \langle e, y \rangle|^{2r} \leq \frac{\alpha}{2} \left[\|x\|^{2r}\|y\|^{2r} + |\langle x, y \rangle|^{2r} \right] + \frac{1-\alpha}{2} \left[\|x\|^r\|y\|^r + |\langle x, y \rangle|^r \right]|\langle x, e \rangle \langle e, y \rangle|^r. \]
\end{lemma}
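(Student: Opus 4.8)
The plan is to strip the statement down to a scalar inequality. Writing $a = |\langle x, e\rangle\langle e, y\rangle|$, $b = \|x\|\|y\|$, and $c = |\langle x, y\rangle|$, all three quantities are nonnegative, and Buzano's inequality (Lemma \ref{buzano}) reads simply $a \le \frac{b+c}{2}$. The assertion of the lemma then becomes the purely scalar claim
\[ a^{2r} \le \frac{\alpha}{2}\left(b^{2r} + c^{2r}\right) + \frac{1-\alpha}{2}\left(b^r + c^r\right) a^r, \]
valid for $r \ge 1$ and $\alpha \in [0,1]$, and it suffices to prove this.

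The key analytic input is the convexity of $t \mapsto t^s$ on $[0,\infty)$ for $s \ge 1$, which yields $\left(\frac{b+c}{2}\right)^s \le \frac{b^s + c^s}{2}$. I would derive this from Lemma \ref{positive op}: applying that lemma to the positive diagonal operator with eigenvalues $b, c$ and the unit vector $\bigl(\tfrac{1}{\sqrt2}, \tfrac{1}{\sqrt2}\bigr)$ gives $\langle Tx,x\rangle = \frac{b+c}{2}$ and $\langle T^s x,x\rangle = \frac{b^s+c^s}{2}$, hence exactly $\left(\frac{b+c}{2}\right)^s \le \frac{b^s+c^s}{2}$. Combining this (for $s=r$ and $s=2r$) with Buzano and the monotonicity of $t\mapsto t^s$, I obtain the two estimates I need:
\[ a^r \le \left(\frac{b+c}{2}\right)^r \le \frac{b^r+c^r}{2}, \qquad a^{2r} \le \left(\frac{b+c}{2}\right)^{2r} \le \frac{b^{2r}+c^{2r}}{2}. \]

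With these in hand, the final step is the trivial convex splitting $a^{2r} = \alpha\, a^{2r} + (1-\alpha)\, a^{2r}$. In the first summand I apply $a^{2r} \le \frac{b^{2r}+c^{2r}}{2}$ directly, producing $\frac{\alpha}{2}(b^{2r}+c^{2r})$. In the second summand I write $a^{2r} = a^r \cdot a^r$ and bound \emph{only one} of the two factors by $a^r \le \frac{b^r+c^r}{2}$, leaving the other factor untouched; this gives $\frac{1-\alpha}{2}(b^r+c^r)a^r$. Summing the two summands reproduces the claimed inequality, and taking the original meanings of $a,b,c$ recovers the statement of Lemma \ref{dolat 23}.

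I expect the only genuine decision — the \textbf{main obstacle}, such as it is — to lie in this last bookkeeping: one must bound \emph{both} factors of $a^{2r}$ in the $\alpha$-term but \emph{only one} factor in the $(1-\alpha)$-term, since the surviving $a^r$ is precisely what the right-hand side of the lemma demands. Everything else is routine once the problem is recast in the scalar variables $a, b, c$.
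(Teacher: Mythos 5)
Your proof is correct and is precisely the argument the paper intends: the paper gives no details, stating only that the lemma ``follows from Lemma \ref{positive op} and Lemma \ref{buzano}'', and your scalar reduction with the splitting $a^{2r}=\alpha\,a^{2r}+(1-\alpha)\,a^{r}\cdot a^{r}$ — bounding both factors in the $\alpha$-term and only one factor in the $(1-\alpha)$-term — is exactly what is being alluded to. The only cosmetic point is that the power-mean step $\left(\frac{b+c}{2}\right)^{s}\leq\frac{b^{s}+c^{s}}{2}$ is just convexity of $t\mapsto t^{s}$ for $s\geq 1$, so the detour through Lemma \ref{positive op} applied to a diagonal operator, while valid, is not needed.
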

Using the above lemma, we prove our next theorem.
\begin{theorem}\label{th4}
	Let $T \in \mathcal{B}(\mathcal{H}).$ Then for $\alpha \in [0,1]$ and $r \geq 1$ we get
	\begin{eqnarray*}
	w^{4r}(T) &\leq& \frac{\alpha}{8} \left\||T|^{4r} + |T^*|^{4r} \right\| + \frac{\alpha}{4}  w(|T^*|^{2r} |T|^{2r})  + \frac{\alpha}{2} w^{2r}(T^2)\\ && +\frac{1-\alpha}{4} \left\| |T|^{2r} + |T^*|^{2r}\right\| w^{2r}(T) + \frac{1-\alpha}{2}w^r(T^2) w^{2r}(T)\\
&\leq&	\frac{1}{2}\left\||T|^{4r} + |T^*|^{4r}\right\|.
	\end{eqnarray*}
\end{theorem}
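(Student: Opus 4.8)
The plan is to specialize Lemma~\ref{dolat 23} to a fixed unit vector and then bound each of the five resulting scalar terms by quantities that survive the supremum over unit vectors. First I would fix $x\in\mathcal{H}$ with $\|x\|=1$ and substitute $x\mapsto Tx$, $y\mapsto T^*x$, $e\mapsto x$ in Lemma~\ref{dolat 23}. Using $\langle Tx,x\rangle\langle x,T^*x\rangle=\langle Tx,x\rangle^2$, $\langle Tx,T^*x\rangle=\langle T^2x,x\rangle$, $\|Tx\|^2=\langle|T|^2x,x\rangle$ and $\|T^*x\|^2=\langle|T^*|^2x,x\rangle$, this turns the lemma into
\[
|\langle Tx,x\rangle|^{4r}\le\frac{\alpha}{2}\Big[\|Tx\|^{2r}\|T^*x\|^{2r}+|\langle T^2x,x\rangle|^{2r}\Big]+\frac{1-\alpha}{2}\Big[\|Tx\|^{r}\|T^*x\|^{r}+|\langle T^2x,x\rangle|^{r}\Big]|\langle Tx,x\rangle|^{2r}.
\]

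Next I would estimate the genuinely new terms. For $\|Tx\|^{2r}\|T^*x\|^{2r}=\langle|T|^2x,x\rangle^{r}\langle|T^*|^2x,x\rangle^{r}$, Lemma~\ref{positive op} gives $\langle|T|^2x,x\rangle^{r}\le\langle|T|^{2r}x,x\rangle$ and likewise for $|T^*|$, so this term is at most $\langle|T|^{2r}x,x\rangle\langle|T^*|^{2r}x,x\rangle$; applying Buzano's inequality (Lemma~\ref{buzano}) to $|T|^{2r}x,\,|T^*|^{2r}x,\,x$ followed by the arithmetic--geometric mean inequality (\ref{eq am-gm}) splits this into $\tfrac14\langle(|T|^{4r}+|T^*|^{4r})x,x\rangle+\tfrac12|\langle|T^*|^{2r}|T|^{2r}x,x\rangle|$, which is exactly what produces the norm term and the $w(|T^*|^{2r}|T|^{2r})$ term. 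For $\|Tx\|^{r}\|T^*x\|^{r}$, inequality (\ref{eq am-gm}) together with Lemma~\ref{positive op} gives the bound $\tfrac12\langle(|T|^{2r}+|T^*|^{2r})x,x\rangle$, while the two terms $|\langle T^2x,x\rangle|^{2r}$ and $|\langle T^2x,x\rangle|^{r}|\langle Tx,x\rangle|^{2r}$ are left as is. Taking the supremum over all unit vectors $x$ (using $\sup_{\|x\|=1}\langle Ax,x\rangle=\|A\|$ for positive $A$, and bounding the supremum of each product of a positive-operator form with $|\langle Tx,x\rangle|^{2r}$ by the product of suprema) yields the first displayed inequality.

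The main work is the second inequality, and here I would show the $\alpha$-weighted terms sum to at most $\tfrac{\alpha}{2}M$ and the $(1-\alpha)$-weighted terms to at most $\tfrac{1-\alpha}{2}M$, where $M:=\||T|^{4r}+|T^*|^{4r}\|$. For the first group the leading norm term is already $\tfrac{\alpha}{8}M$; writing $|T^*|^{2r}|T|^{2r}=(|T^*|^{2r})^*|T|^{2r}$ and $T^2=(T^*)^*T$ and applying (\ref{eq dragomir}) with exponents $1$ and $2r$ respectively gives $w(|T^*|^{2r}|T|^{2r})\le\tfrac12 M$ and $w^{2r}(T^2)\le\tfrac12 M$, so the three $\alpha$-terms total $\tfrac{\alpha}{8}M+\tfrac{\alpha}{8}M+\tfrac{\alpha}{4}M=\tfrac{\alpha}{2}M$. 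For the second group I would set $N:=\||T|^{2r}+|T^*|^{2r}\|$ and invoke (\ref{eq el kit}) for $w^{2r}(T)\le\tfrac12 N$ and (\ref{eq dragomir}) with exponent $r$ for $w^{r}(T^2)\le\tfrac12 N$; each of the two $(1-\alpha)$-terms is then at most $\tfrac18 N^2$, and Lemma~\ref{convex op} with exponent $2$ gives $N^2=\|(|T|^{2r}+|T^*|^{2r})^2\|\le 2M$, so each is at most $\tfrac14 M$ and their sum is $\tfrac{1-\alpha}{2}M$. Adding the two groups gives $\tfrac12 M$. The subtle point, and the step I expect to be the real obstacle, is precisely this bookkeeping: one must route $w^{2r}(T)$ and $w^{r}(T^2)$ through the sharp bounds $\tfrac12 N$ and convert $N^2\le 2M$ via Lemma~\ref{convex op}, since the crude estimate $w^{2r}(T)\le\|T\|^{2r}$ only yields $\|T\|^{4r}\le M$, which is too weak to recover the factor $\tfrac12$.
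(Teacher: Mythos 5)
Your proposal is correct and follows essentially the same route as the paper: the same substitution $x\mapsto Tx$, $y\mapsto T^*x$, $e\mapsto x$ in Lemma~\ref{dolat 23}, the same chain of Lemma~\ref{positive op}, Buzano's inequality and the arithmetic--geometric mean inequality to produce the five terms, and the same bookkeeping (via (\ref{eq dragomir}), (\ref{eq el kit}) and Lemma~\ref{convex op} with $N^2\le 2M$) for the second inequality. The only cosmetic difference is that you cite (\ref{eq el kit}) for $w^{2r}(T)\le\tfrac12\||T|^{2r}+|T^*|^{2r}\|$ where the paper blanket-cites (\ref{eq dragomir}); your attribution is in fact the more precise one.
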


\begin{proof}
	Let $x \in \mathcal{H}$ with $\|x\|=1.$
	Replacing $x$ by $Tx$, $y$ by $T^*x$ and $e$ by $x$ in Lemma \ref{dolat 23} we get, 
	 \begin{eqnarray*}
	|\langle Tx, x \rangle|^{4r} &\leq& \frac{\alpha}{2} \left[\|Tx\|^{2r} \|T^*x\|^{2r} + |\langle T^2x, x \rangle|^{2r} \right]\\ && + \frac{1-\alpha}{2}\left[\|Tx\|^r \|T^*x\|^{r} +|\langle T^2x, x \rangle|^r \right]|\langle Tx, x \rangle|^{2r}\\
	&\leq& \frac{\alpha}{2} \left[\langle |T|^{2r} x, x \rangle \langle x, |T^*|^{2r}x \rangle + |\langle T^2x, x \rangle|^{2r} \right]\\ && + \frac{1-\alpha}{2} \left[\frac{\|Tx\|^{2r} + \|T^*x\|^{2r}}{2} + |\langle T^2x, x \rangle|^r\right] |\langle Tx, x \rangle|^{2r}\\
	&&\,\,\,\,\,\,\,\,\,\,\,\,\,\,\,\,\,\,\,\,\,\,\,\,\,\,\,\,\,\,\,\,\,\,\,\,\,\,\,\,\,\,\,\,\,\,\,\,\,\,\,\,\,\,\,\,\,\,\,(\mbox{using Lemma \ref{positive op} and inequality (\ref{eq am-gm})} )\\
	&\leq& \frac{\alpha}{2} \left[\frac{\||T|^{2r}x\|\||T^*|^{2r}x\| + |\langle |T|^{2r}x, |T^*|^{2r}x \rangle|}{2} + |\langle T^2x, x \rangle|^{2r} \right] \\ && + \frac{1-\alpha}{2} \left[ \frac{1}{2} \left\langle \left(|T|^{2r} + |T^*|^{2r} \right)x, x  \right\rangle + |\langle T^2x, x \rangle|^r\right] |\langle Tx, x \rangle|^{2r}\\
	&&\,\,\,\,\,\,\,\,\,\,\,\,\,\,\,\,\,\,\,\,\,\,\,\,\,\,\,\,\,\,\,\,\,\,\,\,\,\,\,\,\,\,\,\,\,\,\,\,\,\,\,\,\,\,\,\,\,\,\,\,\,\,\,\,\,\,\,\,\,\,\,\,\,(\mbox{using Lemma \ref{buzano} and Lemma \ref{positive op}})\\
	&\leq& \frac{\alpha}{8} \left\langle \left(|T|^{4r} + |T^*|^{4r} \right) x,x \right \rangle + \frac{\alpha}{4} \left|\left\langle\left(|T^*|^{2r} |T|^{2r} \right)x, x \right\rangle \right|\\ && + \frac{\alpha}{2} |\langle T^2x, x \rangle|^{2r} + \frac{1-\alpha}{4} \left\langle \left( |T|^{2r} + |T^*|^{2r}\right)x, x \right\rangle |\langle Tx, x \rangle|^{2r} \\ &&+ \frac{1-\alpha}{2} |\langle T^2x, x \rangle|^r |\langle Tx, x \rangle|^{2r}~(\mbox{using inequality (\ref{eq am-gm})})\\
	&\leq& \frac{\alpha}{8} \left\||T|^{4r} + |T^*|^{4r} \right\| + \frac{\alpha}{4} w\left(|T^*|^{2r} |T|^{2r} \right) + \frac{\alpha}{2} w^{2r}(T^2) \\ && + \frac{1-\alpha}{4} \left\||T|^{2r} + |T^*|^{2r} \right\| w^{2r}(T) + \frac{1-\alpha}{2} w^r(T^2) w^{2r}(T).
	\end{eqnarray*}

Now taking supremum over $x$ with $\|x\|=1$ we obtain our first inequality.
For the second inequality we see, 
\begin{eqnarray*}
w^{4r}(T) &\leq& \frac{\alpha}{8} \left\||T|^{4r} + |T^*|^{4r} \right\| + \frac{\alpha}{8} \left\||T|^{4r} + |T^*|^{4r} \right\| \\ && + \frac{\alpha}{4} \left\||T|^{4r} + |T^*|^{4r} \right\| + \frac{1-\alpha}{8} \left\||T|^{2r} + |T^*|^{2r} \right\|^2 \\ && + \frac{1-\alpha}{8}  \left\||T|^{2r} + |T^*|^{2r} \right\|^2~(\mbox{using inequality (\ref{eq dragomir}}))\\
&=& \frac{\alpha}{2} \left\||T|^{4r} + |T^*|^{4r} \right\| + \frac{1-\alpha}{4} \left\|\left(|T|^{2r} + |T^*|^{2r}\right)^2 \right\|\\
&\leq& \frac{\alpha}{2} \left\||T|^{4r} + |T^*|^{4r} \right\| + \frac{1-\alpha}{2} \left\||T|^{4r} + |T^*|^{4r} \right\|~~(\mbox{using Lemma \ref{convex op}})\\
&=& \frac{1}{2} \left\||T|^{4r} + |T^*|^{4r} \right\|.
\end{eqnarray*}
\end{proof}
Next, we prove our first inequality on product of two operators.
\begin{theorem}\label{th 6}
	Let $T, S \in \mathcal{B}(\mathcal{H}).$ For $r \geq 1$ and $\lambda \geq 0$ then we have
	\begin{eqnarray*}
		w^{2r}(T^*S) &\leq& \frac{1}{2(\lambda+1)}\left\||T|^{2r} + |S|^{2r}\right\|w^r(T^*S) + \frac{\lambda}{4(\lambda+1)} \left\||T|^{4r} + |S|^{4r}\right\|\\&&+ \frac{\lambda}{2(\lambda +1)}w\left(|S|^{2r}|T|^{2r} \right). 
	\end{eqnarray*}
\end{theorem}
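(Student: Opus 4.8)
The plan is to fix a unit vector $x \in \mathcal{H}$, estimate $|\langle T^*Sx, x\rangle|^{2r} = |\langle Sx, Tx\rangle|^{2r}$, and take the supremum at the very end. First I would apply the improved Cauchy--Schwarz inequality of Lemma \ref{cauchyimp} to the pair $u = Sx$, $v = Tx$, obtaining
\[
|\langle Sx, Tx\rangle|^2 \leq \frac{1}{\lambda+1}\|Sx\|\|Tx\||\langle Sx, Tx\rangle| + \frac{\lambda}{\lambda+1}\|Sx\|^2\|Tx\|^2 .
\]
The right-hand side is a convex combination, with weights $\frac{1}{\lambda+1}$ and $\frac{\lambda}{\lambda+1}$, of the quantities $\|Sx\|\|Tx\||\langle Sx,Tx\rangle|$ and $\|Sx\|^2\|Tx\|^2$. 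Hence, raising both sides to the power $r\geq 1$ and using the convexity of $t\mapsto t^r$ (Jensen's inequality) to distribute the exponent over the two summands, I would obtain
\[
|\langle Sx, Tx\rangle|^{2r} \leq \frac{1}{\lambda+1}\|Sx\|^r\|Tx\|^r|\langle Sx, Tx\rangle|^r + \frac{\lambda}{\lambda+1}\|Sx\|^{2r}\|Tx\|^{2r} .
\]

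Next I would treat the two terms separately. For the first term, the arithmetic--geometric mean inequality (\ref{eq am-gm}) gives $\|Sx\|^r\|Tx\|^r \leq \tfrac12(\|Sx\|^{2r}+\|Tx\|^{2r})$, and Lemma \ref{positive op} applied to the positive operators $|S|^2, |T|^2$ replaces $\|Sx\|^{2r}=\langle|S|^2x,x\rangle^r$ by $\langle|S|^{2r}x,x\rangle$ (and similarly for $T$); this produces a factor $\tfrac12\langle(|T|^{2r}+|S|^{2r})x,x\rangle$ in front of $|\langle T^*Sx,x\rangle|^r$, which after taking suprema becomes the first term of the theorem. For the second term, Lemma \ref{positive op} first gives $\|Sx\|^{2r}\|Tx\|^{2r} \leq \langle|S|^{2r}x,x\rangle\langle|T|^{2r}x,x\rangle$; writing the second factor as $\langle x,|T|^{2r}x\rangle$ and applying the Buzano inequality (Lemma \ref{buzano}) with unit vector $e=x$ to the vectors $|S|^{2r}x$ and $|T|^{2r}x$ turns this into $\tfrac12\big(\||S|^{2r}x\|\,\||T|^{2r}x\| + |\langle|S|^{2r}x,|T|^{2r}x\rangle|\big)$.

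The Buzano step is the crux of the argument, since it is exactly what manufactures the mixed numerical-radius term $w(|S|^{2r}|T|^{2r})$. In it, the first summand is controlled by (\ref{eq am-gm}) together with the identity $\||S|^{2r}x\|^2=\langle|S|^{4r}x,x\rangle$, producing the $\||T|^{4r}+|S|^{4r}\|$ contribution with coefficient $\frac{\lambda}{4(\lambda+1)}$; the second summand equals $|\langle|T|^{2r}|S|^{2r}x,x\rangle|$ and is bounded by $w(|T|^{2r}|S|^{2r})=w(|S|^{2r}|T|^{2r})$, the equality holding because these two operators are mutual adjoints and $w(A)=w(A^*)$. Assembling the two terms and taking the supremum over all unit vectors $x$---bounding each diagonal inner product $\langle(\cdot)x,x\rangle$ by the corresponding operator norm and each off-diagonal quantity by the appropriate numerical radius---yields the claimed bound. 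I expect the only genuine obstacles to be the clean justification of the convexity splitting of the exponent $r$ and the bookkeeping of the operator order inside the numerical radius in the Buzano term; both are routine once the substitutions are fixed.
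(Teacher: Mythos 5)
Your proposal is correct and follows essentially the same route as the paper's own proof: Lemma \ref{cauchyimp} applied to $Tx$ and $Sx$, convexity of $t\mapsto t^r$ to split the two terms, AM--GM with Lemma \ref{positive op} for the first term, and Buzano's inequality followed by AM--GM for the second term to produce the $\left\||T|^{4r}+|S|^{4r}\right\|$ and $w\left(|S|^{2r}|T|^{2r}\right)$ contributions. The only cosmetic difference is the order of the vectors inside the inner product, which is immaterial after taking absolute values.
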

\begin{proof}
	Let $x \in \mathcal{H}$ with $\|x\|=1.$ Then 
	\begin{eqnarray*}
		\left|\langle T^*Sx, x \rangle \right|^{2r} &=& \left|\langle Tx, Sx \rangle \right|^{2r}\\
		&\leq& \left(\frac{1}{\lambda+1} \|Tx\|\|Sx\| |\langle Tx, Sx \rangle| + \frac{\lambda}{\lambda+1} \|Tx\|^2\|Sx\|^2 \right)^r (\mbox{using Lemma \ref{cauchyimp}})\\
		&\leq& \frac{1}{\lambda+1} \|Tx\|^{r} \|Sx\|^r |\langle T^*Sx, x \rangle|^r + \frac{\lambda}{\lambda+1} \|Tx\|^{2r} \|Sx\|^{2r}\\
		&&\,\,\,\,\,\,\,\,\,\,\,\,\,\,\,\,\,\,\,\,\,\,\,\,\,\,\,\,\,\,\,\,\,\,\,\,\,\,\,\,\,\,\,\,\,\,\,\,~(\mbox{using the convexity of the function $f(t) = t^r$ })\\
		&\leq& \frac{1}{2(\lambda+1)} \left\langle \left(|T|^{2r}+ |S|^{2r} \right)x, x \right\rangle \left|\langle T^*Sx, x \rangle \right|^r + \frac{\lambda}{\lambda+1} \langle |T|^{2r}x, x \rangle \langle x, |S|^{2r}x \rangle \\ 
		&\leq& \frac{1}{2(\lambda+1)} \left\langle \left(|T|^{2r}+ |S|^{2r} \right)x, x \right\rangle \left|\langle T^*Sx, x \rangle \right|^r \\&&+ \frac{\lambda}{2(\lambda+1)} \left(\left\||T|^{2r}x\right\|\left\||S|^{2r}x \right\| + \left\langle |T|^{2r}x, |S|^{2r}x \right\rangle  \right)~(\mbox{using Lemma \ref{buzano}})\\
		&\leq& \frac{1}{2(\lambda+1)} \left\langle \left(|T|^{2r}+ |S|^{2r} \right)x, x \right\rangle \left|\langle T^*Sx, x \rangle \right|^r \\&&+ \frac{\lambda}{4(\lambda+1)} \left\langle \left(|T|^{4r} + |S|^{4r}  \right)x, x \right\rangle + \frac{\lambda}{2(\lambda+1)} \left\langle \left(|S|^{2r} |T|^{2r}\right)x, x \right\rangle\\
		&\leq& \frac{1}{2(\lambda+1)} \left\| |T|^{2r}+ |S|^{2r} \right \| w^r(T^*S) +\frac{\lambda}{4(\lambda+1)} \left\| |T|^{4r} + |S|^{4r} \right\| \\&&+ \frac{\lambda}{2(\lambda+1)} w\left(|S|^{2r}|T|^{2r} \right).
	\end{eqnarray*}
	Now taking supremum over $x$ with $\|x\|=1$ we get our required inequality.

\end{proof}
The following corollaries are obtained from Theorem \ref{th 6}.
\begin{cor}\label{aldolat filomat}
	Let $T, S \in \mathcal{B}(\mathcal{H})$, then 
	\begin{eqnarray*}
		w^2(T^*S) &\leq& \frac{1}{2(\lambda+1)}\left\||T|^2 + |S|^2 \right\| w(T^*S) + \frac{\lambda}{4(\lambda+1)} \left\| |T|^{4} + |S|^4 \right\|\\ && + \frac{\lambda}{2(\lambda+1)} w\left( |S|^2|T|^2\right)\\
		&\leq& \frac{1}{2(\lambda+1)} \left\||T|^2 + |S|^2 \right\| w(T^*S) + \frac{\lambda}{2(\lambda+1)} \left\||T|^4 + |S|^4 \right\|.
	\end{eqnarray*}
\end{cor}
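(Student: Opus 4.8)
The plan is to deduce both inequalities of Corollary \ref{aldolat filomat} directly from Theorem \ref{th 6} together with Dragomir's inequality (\ref{eq dragomir}).

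First I would obtain the opening inequality by setting $r = 1$ in Theorem \ref{th 6}: this substitution turns $|T|^{2r}$ into $|T|^2$, $|T|^{4r}$ into $|T|^4$, $w^r(T^*S)$ into $w(T^*S)$, and $w(|S|^{2r}|T|^{2r})$ into $w(|S|^2|T|^2)$, reproducing the first displayed line verbatim.

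For the refinement I would observe that the term $\frac{1}{2(\lambda+1)}\||T|^2 + |S|^2\|\,w(T^*S)$ appears identically on both sides and hence cancels, so it remains only to show
\[
\frac{\lambda}{4(\lambda+1)}\left\||T|^4 + |S|^4\right\| + \frac{\lambda}{2(\lambda+1)}\,w\left(|S|^2|T|^2\right) \leq \frac{\lambda}{2(\lambda+1)}\left\||T|^4 + |S|^4\right\|.
\]
For $\lambda = 0$ both sides vanish, and for $\lambda > 0$ I would divide through by $\frac{\lambda}{2(\lambda+1)}$, reducing the claim to the single estimate $w(|S|^2|T|^2) \leq \frac{1}{2}\||T|^4 + |S|^4\|$.

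This last estimate is precisely inequality (\ref{eq dragomir}) with $r = 1$ applied to positive operators. Setting $A = |S|^2$ and $B = |T|^2$, inequality (\ref{eq dragomir}) reads $w(A^*B) \leq \frac{1}{2}\||B|^2 + |A|^2\|$; since $A$ and $B$ are positive we have $A^*B = |S|^2|T|^2$, $|B|^2 = B^*B = |T|^4$ and $|A|^2 = A^*A = |S|^4$, whence $w(|S|^2|T|^2) \leq \frac{1}{2}\||T|^4 + |S|^4\|$. Substituting this back establishes the second inequality. The only matters requiring attention are the trivial separation of the $\lambda = 0$ case and the correct handling of the absolute values under the substitution into (\ref{eq dragomir}); neither is a genuine obstacle, the sole conceptual step being the recognition that $|S|^2|T|^2$ has the form $A^*B$ with $A, B$ positive, which is exactly what allows Dragomir's bound to apply.
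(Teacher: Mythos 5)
Your proposal is correct and follows essentially the same route as the paper: the first inequality is Theorem \ref{th 6} with $r=1$, and the second follows by applying Dragomir's inequality (\ref{eq dragomir}) to the positive operators $|S|^2$ and $|T|^2$ to bound $w(|S|^2|T|^2)$ by $\frac{1}{2}\||T|^4+|S|^4\|$. The paper states this in one line; you have merely supplied the (correct) details of the cancellation and the substitution into (\ref{eq dragomir}).
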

\begin{proof}
	Taking $r=1$ in Theorem \ref{th 6} we get the first inequality and second follows from inequality (\ref{eq dragomir}). So our inequality in Theorem \ref{th 6} generalizes and improves the inequality obtain by Al-Dolat et al in \cite[Th. 2.6]{filomat}.
\end{proof}
\begin{cor}
Let $T, S \in \mathcal{B}(\mathcal{H})$. Then for any $\lambda \geq 0$ and $r \geq 1,$
\begin{eqnarray*}
	w^{2r}(T^*S) &\leq& \frac{1}{2(\lambda+1)}\left\||T|^{2r} + |S|^{2r}\right\|w^r(T^*S) + \frac{\lambda}{4(\lambda+1)} \left\||T|^{4r} + |S|^{4r}\right\|\\&&+ \frac{\lambda}{2(\lambda+1)}w\left(|S|^{2r}|T|^{2r} \right) \\
	&\leq& \frac{1}{2} \left\||T|^{4r} + |S|^{4r} \right\|.
\end{eqnarray*}
\end{cor}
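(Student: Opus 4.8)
The plan is to establish the second inequality, since the first is exactly the conclusion of Theorem~\ref{th 6}. I would bound each of the three terms on the right-hand side of Theorem~\ref{th 6} separately and then recombine, as in the proofs of the preceding corollaries.

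First I would control the mixed term $\frac{1}{2(\lambda+1)}\||T|^{2r}+|S|^{2r}\|\,w^r(T^*S)$. Applying Dragomir's inequality (\ref{eq dragomir}) with the roles of $T$ and $S$ interchanged gives $w^r(T^*S)\leq\frac{1}{2}\||T|^{2r}+|S|^{2r}\|$, so this term is at most $\frac{1}{4(\lambda+1)}\||T|^{2r}+|S|^{2r}\|^2$.

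Next I would treat the product term $\frac{\lambda}{2(\lambda+1)}w(|S|^{2r}|T|^{2r})$. The key observation is that $|S|^{2r}|T|^{2r}$ has the form $A^*B$ with $A=|S|^{2r}$ and $B=|T|^{2r}$, both positive and hence self-adjoint. Applying (\ref{eq dragomir}) with exponent $1$ to the pair $A,B$, and using $|A|=|S|^{2r}$, $|B|=|T|^{2r}$, yields $w(|S|^{2r}|T|^{2r})\leq\frac{1}{2}\||T|^{4r}+|S|^{4r}\|$, so this term is bounded by $\frac{\lambda}{4(\lambda+1)}\||T|^{4r}+|S|^{4r}\|$. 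I expect this reinterpretation of the product of absolute values as a single $A^*B$ to be the only nonroutine step; everything else is bookkeeping.

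Finally, collecting the three estimates (the middle term $\frac{\lambda}{4(\lambda+1)}\||T|^{4r}+|S|^{4r}\|$ is kept as is), the right-hand side is at most $\frac{1}{4(\lambda+1)}\||T|^{2r}+|S|^{2r}\|^2+\frac{\lambda}{2(\lambda+1)}\||T|^{4r}+|S|^{4r}\|$. To finish I would invoke Lemma~\ref{convex op} with exponent $2$ applied to $A=|T|^{2r}$, $B=|S|^{2r}$, which gives $\|(|T|^{2r}+|S|^{2r})^2\|\leq 2\||T|^{4r}+|S|^{4r}\|$, equivalently $\||T|^{2r}+|S|^{2r}\|^2\leq 2\||T|^{4r}+|S|^{4r}\|$. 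Substituting this turns the first piece into at most $\frac{1}{2(\lambda+1)}\||T|^{4r}+|S|^{4r}\|$, and adding the second piece collapses the coefficient to $\frac{1+\lambda}{2(\lambda+1)}=\frac12$, yielding exactly $\frac{1}{2}\||T|^{4r}+|S|^{4r}\|$, as required.
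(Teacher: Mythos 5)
Your proposal is correct and follows essentially the same route as the paper: apply inequality (\ref{eq dragomir}) to bound both $w^r(T^*S)$ and $w\left(|S|^{2r}|T|^{2r}\right)$, then use Lemma \ref{convex op} to replace $\left\||T|^{2r}+|S|^{2r}\right\|^2$ by $2\left\||T|^{4r}+|S|^{4r}\right\|$. In fact your bookkeeping of the factor $2$ from Lemma \ref{convex op} (giving $\frac{1}{2(\lambda+1)}+\frac{\lambda}{2(\lambda+1)}=\frac12$) is slightly more careful than the printed intermediate line, which drops that factor but still states the correct final constant.
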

\begin{proof}
	Using inequality (\ref{eq dragomir}) we get
	\begin{eqnarray*}
		w^{2r}(T^*S) &\leq& \frac{1}{2(\lambda+1)}\left\||T|^{2r} + |S|^{2r}\right\|w^r(T^*S) + \frac{\lambda}{4(\lambda+1)} \left\||T|^{4r} + |S|^{4r}\right\|\\&&+ \frac{\lambda}{2(\lambda+1)}w\left(|S|^{2r}|T|^{2r} \right) \\
		&\leq& 	 \frac{1}{4(\lambda+1)}\left\||T|^{2r} + |S|^{2r}\right\|^2 + \frac{\lambda}{4(\lambda+1)} \left\||T|^{4r} + |S|^{4r}\right\|\\&&+ \frac{\lambda}{4(\lambda+1)} \left\||T|^{4r} + |S|^{4r} \right\|~(\mbox{using inequality (\ref{eq dragomir})})\\
		&\leq&  \frac{1}{4(\lambda+1)}\left\||T|^{4r} + |S|^{4r}\right\| + \frac{\lambda}{4(\lambda+1)} \left\||T|^{4r} + |S|^{4r}\right\|\\&&+ \frac{\lambda}{4(\lambda+1)} \left\||T|^{4r} + |S|^{4r} \right\|~(\mbox{using Lemma \ref{convex op}})\\
		&=& \frac{1}{2} \left\| |T|^{4r} + |S|^{4r}\right\|.
	\end{eqnarray*}
\end{proof}

Next, we prove an improvement of triangle inequality of norm. 
\begin{lemma}\label{imp triangle}
	Let $x, y \in  \mathcal{H}$ and $\lambda \geq 0.$ Then 
	\begin{eqnarray*}
		\|x+y\|^2 &\leq& \frac{1}{\lambda+1} \left(\|x\|+\|y\| \right)\|x+y\| + \frac{\lambda}{\lambda+1} \left(\|x\|+\|y\|\right)^2 \\
		&\leq& \left(\|x\|+\|y\| \right)^2.
	\end{eqnarray*}
\end{lemma}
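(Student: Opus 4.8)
The plan is to reduce the entire statement to a single elementary scalar inequality and then feed it nothing more than the ordinary triangle inequality. Set $a = \|x+y\|$ and $b = \|x\|+\|y\|$. The triangle inequality gives $0 \le a \le b$, and both displayed estimates are instances of the purely scalar claim: for $0 \le a \le b$ and $\lambda \ge 0$,
\[
a^2 \;\le\; \frac{1}{\lambda+1}ab + \frac{\lambda}{\lambda+1}b^2 \;\le\; b^2 .
\]
This is exactly the same mechanism underlying Lemma \ref{cauchyimp}, where one takes $a = |\langle x, y\rangle|$ and $b = \|x\|\|y\|$ and invokes the Cauchy--Schwarz bound $a \le b$; here the role of Cauchy--Schwarz is played by the triangle inequality.

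For the first inequality I would split $a^2 = \frac{1}{\lambda+1}a^2 + \frac{\lambda}{\lambda+1}a^2$ and then raise each factor $a$ that I am allowed to: in the first summand replace one factor $a$ by $b$ (using $a \le b$), and in the second summand replace both, giving the middle quantity. For the second inequality I replace the remaining factor $a$ in the first summand by $b$, again by $a \le b$, arriving at $\frac{1}{\lambda+1}b^2 + \frac{\lambda}{\lambda+1}b^2 = b^2$. Nonnegativity of $\lambda$ is what guarantees the coefficients are legitimate (a genuine convex combination), so every replacement preserves the inequality. Equivalently, one may simply record the factorization
\[
\frac{1}{\lambda+1}ab + \frac{\lambda}{\lambda+1}b^2 - a^2 = \frac{(b-a)\,((\lambda+1)a + \lambda b)}{\lambda+1} \ge 0,
\]
and the analogous one-line computation for the upper bound, both of which are manifestly nonnegative for $0 \le a \le b$ and $\lambda \ge 0$.

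There is essentially no obstacle here: the only point requiring any care is to recognize that the correct base estimate is the triangle inequality $\|x+y\| \le \|x\|+\|y\|$ (not Cauchy--Schwarz), after which the statement is a transparent consequence of monotonicity of $t \mapsto t$ and $t \mapsto t^2$ on $[0,\infty)$. Finally I would substitute $a$ and $b$ back to recover the stated bounds in terms of $\|x+y\|$ and $\|x\|+\|y\|$, completing the proof.
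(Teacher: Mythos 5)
Your proof is correct and follows essentially the same route as the paper: both arguments reduce to the scalar fact that $a^2 \le \frac{1}{\lambda+1}ab + \frac{\lambda}{\lambda+1}b^2 \le b^2$ whenever $0 \le a \le b$, with $a = \|x+y\|$, $b = \|x\|+\|y\|$ supplied by the triangle inequality (the paper adds $\lambda(b^2-a^2)\ge 0$ and rearranges, which is the same computation as your convex splitting). Your explicit factorization is a clean verification and nothing is missing.
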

\begin{proof}
	Using triangle inequality for norm we get,
	\begin{eqnarray*}
		\|x+y\|^2 &\leq& \|x+y\|\left(\|x\|+\|y\|\right)\\
		&\leq&  \|x+y\|\left(\|x\|+\|y\|\right) + \lambda\left(\left(\|x\|+\|y\| \right)^2-\|x+y\|^2 \right)\\
		\Rightarrow (\lambda+1)\|x+y\|^2  &\leq& \left(\|x\|+\|y\|\right)\|x+y\| +  \lambda \left(\|x\|+\|y\|\right)^2 \\
		\|x+y\|^2 &\leq& \frac{1}{\lambda+1} \left(\|x\|+\|y\| \right)\|x+y\| +\frac{\lambda}{\lambda+1} \left(\|x\|+\|y\| \right)^2 . 
	\end{eqnarray*}
	Using the fact that $\|x+y\| \leq \|x\|+ \|y\|$ we can prove the second inequality of the lemma.
\end{proof}
Next lemma is the Polarization identity for real Hilbert space.
\begin{lemma}\label{polarization}
	Let $x, y \in \mathbb{H},$ where $\mathbb{H}$ is a real Hilbert space. Then \[\langle x, y \rangle = \frac{1}{4}\left(\|x+y\|^2 - \|x-y\|^2 \right). \]
\end{lemma}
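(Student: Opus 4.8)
The plan is to expand both squared norms on the right-hand side directly in terms of the inner product and then subtract, exploiting the fact that on a \emph{real} Hilbert space the inner product is symmetric and bilinear. First I would write
\[
\|x+y\|^2 = \langle x+y,\, x+y \rangle = \|x\|^2 + \langle x, y \rangle + \langle y, x \rangle + \|y\|^2,
\]
and similarly
\[
\|x-y\|^2 = \langle x-y,\, x-y \rangle = \|x\|^2 - \langle x, y \rangle - \langle y, x \rangle + \|y\|^2.
\]
Here the two cross terms are retained separately at first so that the argument makes transparent where real-valuedness is used.

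The crucial step is to invoke the symmetry of the inner product on a real Hilbert space, namely $\langle x, y \rangle = \langle y, x \rangle$, which lets me replace each pair of cross terms by $2\langle x, y\rangle$ and $-2\langle x, y\rangle$ respectively. Subtracting the second display from the first then causes the $\|x\|^2$ and $\|y\|^2$ contributions to cancel, leaving
\[
\|x+y\|^2 - \|x-y\|^2 = 4\langle x, y \rangle.
\]
Dividing by $4$ yields the claimed identity.

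There is essentially no obstacle here: the entire content is the bilinear expansion together with symmetry, and the only point worth flagging is that the identity as stated fails over a complex Hilbert space, where $\langle y, x\rangle = \overline{\langle x, y\rangle}$ and the same computation recovers only $\operatorname{Re}\langle x, y\rangle$. Restricting to the real setting $\mathbb{H}$ is therefore exactly what makes the two-term polarization formula exact, and I would state this restriction explicitly at the outset of the proof.
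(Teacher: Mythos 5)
Your proof is correct and complete: the bilinear expansion of $\|x\pm y\|^2$, the use of symmetry $\langle x,y\rangle=\langle y,x\rangle$ in the real setting, and the subtraction yielding $4\langle x,y\rangle$ is exactly the standard argument. The paper states this lemma without any proof (it is the classical polarization identity), so there is nothing to compare against; your remark that the two-term formula only recovers $\operatorname{Re}\langle x,y\rangle$ over a complex space is a correct and worthwhile clarification of why the real-Hilbert-space hypothesis is needed.
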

Using Lemma \ref{imp triangle} and Lemma \ref{polarization} we get the following corollary.
\begin{cor}\label{app imp triangle}
	Let $x, y \in \mathbb{H}$ and $\lambda \geq 0$. Then \[|\langle x, y \rangle| \leq \frac{\lambda}{4(\lambda+1)} \left(\|x\| + \|y\| \right)^2 + \frac{1}{4(\lambda +1)} \|x+y\| \left(\|x\| +\|y\| \right) .\]
\end{cor}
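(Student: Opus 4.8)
The plan is to push the polarization identity (Lemma \ref{polarization}) through the refined triangle inequality (Lemma \ref{imp triangle}). Working in the real Hilbert space $\mathbb{H}$, Lemma \ref{polarization} gives $\langle x, y\rangle = \tfrac14\left(\|x+y\|^2 - \|x-y\|^2\right)$, hence $|\langle x, y\rangle| = \tfrac14\bigl|\,\|x+y\|^2 - \|x-y\|^2\,\bigr|$. Since the target right-hand side is assembled out of $\|x+y\|$, $\|x\|+\|y\|$ and $\lambda$ --- exactly the data appearing in Lemma \ref{imp triangle} --- the strategy is to reduce the polarization difference to the single term $\|x+y\|^2$ and then invoke Lemma \ref{imp triangle}.

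Concretely, once the difference is controlled by $\tfrac14\|x+y\|^2$, Lemma \ref{imp triangle} applied to $\|x+y\|^2$ yields
\[
\tfrac14\|x+y\|^2 \le \frac{1}{4(\lambda+1)}\left(\|x\|+\|y\|\right)\|x+y\| + \frac{\lambda}{4(\lambda+1)}\left(\|x\|+\|y\|\right)^2,
\]
which is precisely the asserted bound; no further estimation is needed after this point, so the entire difficulty is concentrated in the reduction step that precedes it.

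That reduction is the main obstacle. The clean case is $\langle x, y\rangle \ge 0$ (equivalently $\|x+y\| \ge \|x-y\|$): there $\|x-y\|^2 \ge 0$ may simply be discarded to give $|\langle x, y\rangle| = \langle x, y\rangle \le \tfrac14\|x+y\|^2$, and the chain above finishes. To cover $\langle x, y\rangle < 0$ I would exploit the symmetry $|\langle x, y\rangle| = |\langle x, -y\rangle|$ and rerun the argument on the pair $(x, -y)$, using $\|-y\| = \|y\|$ so that the $\|x\|+\|y\|$ factors are unaffected; the delicate point is that this substitution replaces $\|x+y\|$ by $\|x-y\|$, and I would scrutinize exactly this step, since landing the final estimate on $\|x+y\|$ rather than $\|x-y\|$ is what the stated inequality demands and is the crux of the whole argument.
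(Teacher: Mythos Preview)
Your route---polarization identity followed by Lemma \ref{imp triangle}---is exactly the one the paper intends (it only says the corollary follows from Lemmas \ref{imp triangle} and \ref{polarization}), and your treatment of the case $\langle x,y\rangle\ge 0$ is complete: dropping $\|x-y\|^2\ge 0$ from the polarization identity gives $|\langle x,y\rangle|\le\tfrac14\|x+y\|^2$, and Lemma \ref{imp triangle} then yields the stated bound verbatim.

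Your unease about the case $\langle x,y\rangle<0$ is not a gap in your reasoning but a genuine defect in the statement itself. Take $y=-x$ with $x\ne 0$: then $|\langle x,y\rangle|=\|x\|^2$, while $\|x+y\|=0$ and $\|x\|+\|y\|=2\|x\|$, so the right-hand side equals $\dfrac{\lambda}{\lambda+1}\,\|x\|^2<\|x\|^2$ for every $\lambda\ge 0$. Hence the inequality, as written, is false. The sign-flip trick you describe produces the bound with $\|x-y\|$ in place of $\|x+y\|$, and since $\|x-y\|>\|x+y\|$ precisely when $\langle x,y\rangle<0$, there is no way to convert it back; what the polarization argument actually yields is
\[
|\langle x,y\rangle|\ \le\ \frac{\lambda}{4(\lambda+1)}\bigl(\|x\|+\|y\|\bigr)^2+\frac{1}{4(\lambda+1)}\max\{\|x+y\|,\|x-y\|\}\bigl(\|x\|+\|y\|\bigr).
\]
The paper's one-line justification passes over this, and the same counterexample ($T=I$, $S=-I$) shows that Proposition \ref{polrization app}, which is derived from this corollary, fails as well.
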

Next proposition is a simple consequence of the above corollary. 
\begin{prop}\label{polrization app}
	Let $	T, S \in \mathcal{B}(\mathbb{H})$, and $\lambda \geq 0.$ Then 
	\begin{eqnarray*}
		w(T^*S) &\leq& \frac{\lambda}{2(\lambda+1)} \left\||T|^2 + |S|^2 \right\| + \frac{1}{4(\lambda+1)} \|T+S\| \left(\|T\|+\|S\| \right).
	\end{eqnarray*}
\end{prop}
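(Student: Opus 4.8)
The plan is to apply Corollary \ref{app imp triangle} pointwise to the pair of vectors $Sx$ and $Tx$ for an arbitrary unit vector $x \in \mathbb{H}$, and then to pass to the supremum. Since $\langle T^*Sx, x \rangle = \langle Sx, Tx \rangle$, I would first write
\[
|\langle T^*Sx, x \rangle| = |\langle Sx, Tx \rangle| \leq \frac{\lambda}{4(\lambda+1)}\bigl(\|Sx\| + \|Tx\|\bigr)^2 + \frac{1}{4(\lambda+1)}\|Sx + Tx\|\bigl(\|Sx\| + \|Tx\|\bigr),
\]
which is just the corollary with the substitution $x \mapsto Sx$, $y \mapsto Tx$.

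The two summands on the right then need to be estimated separately. For the first summand I would expand the square and apply the arithmetic-geometric mean inequality (\ref{eq am-gm}) to the cross term $2\|Sx\|\|Tx\|$, giving $\bigl(\|Sx\| + \|Tx\|\bigr)^2 \leq 2\bigl(\|Tx\|^2 + \|Sx\|^2\bigr) = 2\langle(|T|^2 + |S|^2)x, x\rangle$; hence this summand is dominated by $\frac{\lambda}{2(\lambda+1)}\langle(|T|^2+|S|^2)x,x\rangle$. For the second summand, since $x$ is a unit vector, I would use $\|Sx + Tx\| = \|(T+S)x\| \leq \|T+S\|$ together with $\|Sx\| + \|Tx\| \leq \|S\| + \|T\|$, so that this summand is already bounded by the constant $\frac{1}{4(\lambda+1)}\|T+S\|\bigl(\|T\|+\|S\|\bigr)$.

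Finally I would take the supremum over all unit vectors $x$. Because the second bound is independent of $x$, the supremum acts only on the first summand, and since $|T|^2 + |S|^2$ is a positive operator its numerical radius coincides with its norm, so $\sup_{\|x\|=1}\langle(|T|^2+|S|^2)x,x\rangle = \||T|^2+|S|^2\|$. Assembling the two pieces yields exactly the asserted inequality. There is no genuine obstacle here, as the statement is a direct corollary; the only point requiring a little care is to bound the $x$-independent second summand \emph{before} taking the supremum, so that it is not inflated, and to recall that $\sup_{\|x\|=1}\langle Px,x\rangle = \|P\|$ for a positive operator $P$.
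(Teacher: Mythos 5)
Your proposal is correct and follows essentially the same route as the paper: apply Corollary \ref{app imp triangle} to the vectors $Tx$ and $Sx$, bound $(\|Tx\|+\|Sx\|)^2$ by $2(\|Tx\|^2+\|Sx\|^2)$ (you via AM--GM on the cross term, the paper via convexity of $t\mapsto t^2$ --- the same estimate), control the second summand by operator norms, and take the supremum. The only cosmetic difference is your substitution $(Sx,Tx)$ in place of $(Tx,Sx)$, which is immaterial since $\langle T^*Sx,x\rangle=\langle Sx,Tx\rangle$ and the bound is symmetric.
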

\begin{proof}
	Let $x \in \mathbb{H}$ with $\|x\|=1$. Replacing  $x$ by $Tx$ and $y$ by $Sx$ in corollary \ref{app imp triangle} we get
	\begin{eqnarray*}
		|\langle T^*Sx, x \rangle| &\leq& \frac{\lambda}{4(\lambda+1)} \left( \|Tx\| + \|Sx\|\right)^2 + \frac{1}{4(\lambda+1)} \left\|(T+S)x\right\| \left(\|Tx\| + \|Sx\| \right)\\
		&\leq& \frac{\lambda}{2(\lambda+1)} \left(\|Tx\|^2 + \|Sx\|^2 \right) +\frac{1}{4(\lambda+1)} \left\|(T+S)x\right\| \left(\|Tx\| + \|Sx\| \right)\\
		&&\,\,\,\,\,\,\,\,\,\,\,\,\,\,\,\,\,\,\,\,\,\,\,\,\,\,\,\,\,\,\,\,\,\,\,\,\,\,\,\,\,\,\,\,\,\,\,\,\,\,\,\,\,\,\,\,\,\,\,\,\,\,\,\,\,\,\,\,\,\,\,~(\mbox{using the convexity of $f(t) =t^r$})\\
		&\leq& \frac{\lambda}{2(\lambda+1)} \left\langle \left(|T|^2 + |S|^2\right)x, x \right\rangle + \frac{1}{4(\lambda+1)} \left\|T+S\right\| \left(\|T\| + \|S\| \right)\\
		&\leq& \frac{\lambda}{2(\lambda+1)} \left\||T|^2 + |S|^2 \right\| + \frac{1}{4(\lambda+1)} \|T+S\| \left(\|T\|+\|S\| \right).
	\end{eqnarray*}
	Now taking supremum over all unit vectors $x$ we obtain our required result. 
\end{proof}

Next, we prove an upper bound involving sum and product of operators.

\begin{theorem}\label{th5}
	Let $A, B, C, D \in \mathcal{B}(\mathcal{H}).$ Then for $r \geq 1$ we have 
	\begin{eqnarray*}
	w^{2r}(A^*B+ C^*D) &\leq& 2^{2r-3} \left\||A|^{4r} + |B|^{4r} + |C|^{4r} + |D|^{4r} \right\|\\&& + 2^{2r-2} \left(w^r\left(|B|^2|A|^2 \right) +w^r\left(|D|^2|C|^2 \right) \right). 
	\end{eqnarray*}
\end{theorem}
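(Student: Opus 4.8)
The plan is to reduce the whole statement to a single operator estimate for $|\langle Bx, Ax\rangle|^{2r}$ and then reassemble the four operators by convexity. For a unit vector $x$, I would first split $\langle(A^*B+C^*D)x,x\rangle=\langle Bx,Ax\rangle+\langle Dx,Cx\rangle$, so that the triangle inequality together with the elementary convexity bound $(a+b)^{2r}\le 2^{2r-1}(a^{2r}+b^{2r})$ gives
$$|\langle(A^*B+C^*D)x,x\rangle|^{2r}\le 2^{2r-1}\bigl(|\langle Bx,Ax\rangle|^{2r}+|\langle Dx,Cx\rangle|^{2r}\bigr).$$
Everything then rests on the single-term estimate
$$|\langle Bx,Ax\rangle|^{2r}\le \tfrac14\langle(|A|^{4r}+|B|^{4r})x,x\rangle+\tfrac12\,|\langle|B|^2|A|^2x,x\rangle|^{r}, \qquad (\star)$$
and the analogous bound for $|\langle Dx,Cx\rangle|^{2r}$.

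To prove $(\star)$ the order of operations is decisive, and this is where the main subtlety lies. Starting from Cauchy--Schwarz, $|\langle Bx,Ax\rangle|^2\le\|Bx\|^2\|Ax\|^2=\langle|B|^2x,x\rangle\langle|A|^2x,x\rangle$, hence $|\langle Bx,Ax\rangle|^{2r}\le(\langle|B|^2x,x\rangle\langle|A|^2x,x\rangle)^r$. \emph{Before} raising to the power $r$, I would apply Buzano (Lemma \ref{buzano}) with $x\mapsto|A|^2x$, $y\mapsto|B|^2x$, $e\mapsto x$, noting $\langle|A|^2x,|B|^2x\rangle=\langle|B|^2|A|^2x,x\rangle$, and then the AM--GM inequality (\ref{eq am-gm}) on $\||A|^2x\|\,\||B|^2x\|=\langle|A|^4x,x\rangle^{1/2}\langle|B|^4x,x\rangle^{1/2}$, to obtain at the first power
$$\langle|B|^2x,x\rangle\langle|A|^2x,x\rangle\le \tfrac14\langle|A|^4x,x\rangle+\tfrac14\langle|B|^4x,x\rangle+\tfrac12\,|\langle|B|^2|A|^2x,x\rangle|.$$
The right-hand side is a convex combination with weights $\tfrac14,\tfrac14,\tfrac12$, so raising to the power $r$ and invoking the convexity of $t\mapsto t^r$ distributes the exponent termwise. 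Only at the end would I apply McCarthy's inequality (Lemma \ref{positive op}) to the positive operators $|A|^4$ and $|B|^4$, passing from $\langle|A|^4x,x\rangle^{r}$ to $\langle|A|^{4r}x,x\rangle$, which yields $(\star)$.

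I expect this ordering to be the real obstacle. If one instead applies McCarthy first, the Buzano step produces $|\langle|B|^{2r}|A|^{2r}x,x\rangle|$ and hence a bound involving $w(|B|^{2r}|A|^{2r})$ rather than the required $w^r(|B|^2|A|^2)$; these agree only at $r=1$. The point is therefore to use Buzano and AM--GM at the first power to manufacture a genuine convex combination, then let convexity of $t\mapsto t^r$ carry the exponent through, and apply McCarthy last, so that the inner exponent stays fixed at $2$ while the outer exponent becomes $r$. The one routine check here is that the Buzano assignment gives $|B|^2|A|^2$ and not its adjoint.

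Finally I would substitute $(\star)$ and its $C,D$ analogue into the display of the first paragraph and regroup the positive-operator terms. Taking the supremum over unit vectors $x$ sends $\langle(|A|^{4r}+|B|^{4r}+|C|^{4r}+|D|^{4r})x,x\rangle$ to $\||A|^{4r}+|B|^{4r}+|C|^{4r}+|D|^{4r}\|$, since that operator is positive, and sends each $|\langle|B|^2|A|^2x,x\rangle|^{r}$ to $w^r(|B|^2|A|^2)$ because $t\mapsto t^r$ is increasing; using the splitting of the supremum of a sum as an upper bound, the factor $2^{2r-1}$ combines with $\tfrac14$ and $\tfrac12$ to give exactly the constants $2^{2r-3}$ and $2^{2r-2}$, which completes the proof.
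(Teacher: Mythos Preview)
Your proposal is correct and follows essentially the same route as the paper: triangle inequality plus $(a+b)^{2r}\le 2^{2r-1}(a^{2r}+b^{2r})$, then Cauchy--Schwarz, Buzano with $e=x$, $x=|A|^2x$, $y=|B|^2x$, AM--GM, convexity of $t\mapsto t^r$, and McCarthy, assembled into exactly the single-term estimate $(\star)$ before taking the supremum. The only cosmetic difference is that the paper applies AM--GM \emph{after} raising the Buzano bound to the $r$-th power (using the two-term convex split $\tfrac12,\tfrac12$ and then AM--GM on $\||A|^2x\|^r\||B|^2x\|^r$), whereas you apply AM--GM first and then use the three-term convex split $\tfrac14,\tfrac14,\tfrac12$; both orderings yield the identical bound.
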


\begin{proof}
	Let $x \in \mathcal{H}$ with $\|x\|=1.$ Then we have
	\begin{eqnarray*}
	\left|\langle (A^*B+ C^*D)x, x \rangle \right|^{2r} &\leq& \left(|\langle A^*Bx, x \rangle| + |\langle C^*Dx , x \rangle| \right)^{2r}\\
	&\leq& 2^{2r-1} \left(|\langle A^*Bx, x \rangle|^{2r}+ |\langle C^*Dx , x \rangle|^{2r} \right)\\
	&\leq& 2^{2r-1} \left(\langle |A|^2x, x \rangle^r \langle |B|^2x, x \rangle^r  + \langle |C|^2x, x \rangle^r \langle |D|^2x, x \rangle^r  \right)\\
	&=& 2^{2r-1} \left(\left(\langle |A|^2x, x \rangle \langle x, |B|^2x \rangle \right)^r + \left(\langle |C|^2x, x \rangle \langle x, |D|^2x \rangle \right)^r  \right)\\
	&\leq& 2^{2r-1} \left(\frac{\||A|^2x\|\cdot \||B|^2x\| + \left|\langle |A|^2x, |B|^2x \rangle\right|}{2} \right)^r\\&& + 2^{2r-1} \left(\frac{\||C|^2x\|\cdot \||D|^2x\| +\left| \langle |C|^2x, |D|^2x \rangle\right|}{2} \right)^r 
		\end{eqnarray*}
\begin{eqnarray*}
	&\leq& 2^{2r-2} \left(\||A|^2x\|^r\cdot \||B|^2x\|^r + \left|\langle |A|^2x, |B|^2x \rangle\right|^r \right)\\ && + 2^{2r-2} \left(\||C|^2x\|^r\cdot \||D|^2x\|^r + \left|\langle |C|^2x, |D|^2x \rangle\right|^r \right)\\
	&\leq& 2^{2r-3} \left\langle \left(|A|^{4r} + |B|^{4r} + |C|^{4r} + |D|^{4r} \right)x, x \right\rangle \\ &&+ 2^{2r-2} \left(\left|\left\langle \left(|B|^2 |A|^2\right)x, x \right\rangle \right|^r + \left|\left\langle \left(|D|^2 |C|^2\right)x, x \right\rangle \right|^r \right)\\
	&\leq&  2^{2r-3} \left\||A|^{4r} + |B|^{4r} + |C|^{4r} + |D|^{4r} \right\|\\&&+ 2^{2r-2} \left(w^r \left(|B|^2|A|^2 \right) + w^r \left(|D|^2|C|^2 \right) \right).
		\end{eqnarray*}
	Taking supremum over all unit vectors we get our desired inequality.
\end{proof}
The following corollary can be obtained from Theorem \ref{th5}. 
\begin{cor}
	If $A, B \in \mathcal{B}(\mathcal{H})$ and $r \geq 2$ then 
	\begin{eqnarray*}
	w^{2r}(AB+CD) & \leq &2^{2r-3} \left\||A^*|^{4r} + |B|^{4r} + |C^*|^{4r} + |D|^{4r}\right\| \\&&+ 2^{2r-2}\left[ w^r\left(|B|^2|A^*|^2 \right) + w^r\left(|D|^2|C^*|\right)\right] \\
	&\leq& 2^{2r-2}\left[ \left\||A^*|^{4r} + |B|^{4r} \right\| + \left\||C^*|^{4r} + |D|^{4r} \right\|\right.]
	\end{eqnarray*}
\end{cor}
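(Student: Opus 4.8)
The plan is to derive both inequalities directly from Theorem \ref{th5}, using only a change of the input operators together with Dragomir's inequality (\ref{eq dragomir}) to estimate the two mixed numerical radius terms. The first inequality is essentially a restatement of Theorem \ref{th5} after substitution, and the second is obtained by bounding each remaining piece.

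First I would apply Theorem \ref{th5} to the operators $A^*, B, C^*, D$ in place of $A, B, C, D$. Since $(A^*)^*B = AB$ and $(C^*)^*D = CD$, the left-hand side becomes $w^{2r}(AB+CD)$, while $|A^*|$ replaces $|A|$ and $|C^*|$ replaces $|C|$ throughout the right-hand side. This immediately produces
\[
w^{2r}(AB+CD) \leq 2^{2r-3}\left\||A^*|^{4r} + |B|^{4r} + |C^*|^{4r} + |D|^{4r}\right\| + 2^{2r-2}\left(w^r\!\left(|B|^2|A^*|^2\right) + w^r\!\left(|D|^2|C^*|^2\right)\right),
\]
which is the first stated bound. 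Note that $r \geq 2$ is well within the range $r \geq 1$ required by Theorem \ref{th5}, so the substitution is legitimate.

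For the second inequality I would treat the two contributions separately. The norm term I split by the triangle inequality for the operator norm,
\[
\left\||A^*|^{4r} + |B|^{4r} + |C^*|^{4r} + |D|^{4r}\right\| \leq \left\||A^*|^{4r} + |B|^{4r}\right\| + \left\||C^*|^{4r} + |D|^{4r}\right\|,
\]
contributing a factor $2^{2r-3}$ in front of these two norms. For each numerical radius term, the key observation is the factorization $|B|^2|A^*|^2 = S^*T$ with the self-adjoint choices $S = |B|^2$ and $T = |A^*|^2$; then $|S|^{2r} = |B|^{4r}$ and $|T|^{2r} = |A^*|^{4r}$, so (\ref{eq dragomir}) gives $w^r(|B|^2|A^*|^2) \leq \tfrac{1}{2}\||A^*|^{4r} + |B|^{4r}\|$ and symmetrically $w^r(|D|^2|C^*|^2) \leq \tfrac{1}{2}\||C^*|^{4r} + |D|^{4r}\|$. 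The coefficient $2^{2r-2}$ in front of these terms thus becomes $2^{2r-2}\cdot\tfrac{1}{2} = 2^{2r-3}$, and adding the two $2^{2r-3}$ contributions yields the coefficient $2^{2r-2}$ multiplying $\||A^*|^{4r} + |B|^{4r}\| + \||C^*|^{4r} + |D|^{4r}\|$, as claimed.

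The only genuinely nonroutine step is recognizing the correct factorization $|B|^2|A^*|^2 = S^*T$ that makes Dragomir's inequality applicable with the clean exponents $|S|^{2r} = |B|^{4r}$ and $|T|^{2r} = |A^*|^{4r}$; once this is in place, the remainder is a direct substitution into Theorem \ref{th5}, an application of the triangle inequality for norms, and elementary bookkeeping of the powers of $2$.
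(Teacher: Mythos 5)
Your proof is correct and is evidently the intended derivation: the paper offers no argument beyond the remark that the corollary ``can be obtained from Theorem \ref{th5}'', and your route---substituting $A\mapsto A^*$ and $C\mapsto C^*$ in that theorem, then applying the triangle inequality to the norm term and Dragomir's inequality (\ref{eq dragomir}) with $S=|B|^2$, $T=|A^*|^2$ (so that $|S|^{2r}=|B|^{4r}$, $|T|^{2r}=|A^*|^{4r}$) to the numerical-radius terms---is the natural way to fill it in, with the powers of $2$ combining exactly as you describe. Note that your argument in fact only requires $r\geq 1$, and it tacitly corrects two slips in the printed statement ($w^r(|D|^2|C^*|)$ should read $w^r(|D|^2|C^*|^2)$, and the hypothesis should list all of $A,B,C,D$).
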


\section{Inequalities via Kantorovich ratio}
In this section we provide an upper bound for numerical radius of invertible operator via Kantorovich ratio. Few preliminary things we have already covered on Kantorovich ratio in the introductory part of the article. So, we now move on to prove our first theorem of this section.
\begin{theorem}\label{th1}
	Let $T\in \mathcal{B}(\mathcal{H})^{-1}$ and let $f$ be an increasing convex function on $[0, \infty)$ with $f(0) =0.$ If $m|T| \leq |T^*|$ or $m|T^*| \leq |T|$ Then for $m>1$, \[f\left(w^2(T) \right) \leq \frac{1}{4\sqrt{K(m, 2)}} \left\|f(|T|^2) + f(|T^*|^2)\right\|+ \frac{1}{2} f(w(T^2)) ,\] where $K(m, 2) = \frac{(m+1)^2}{4m}.$	
\end{theorem}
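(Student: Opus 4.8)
The plan is to run a vector-wise Buzano estimate, push the increasing convex function $f$ through it, and insert the Kantorovich refinement of the AM--GM inequality at the single place where a geometric mean appears; the hypothesis relating $|T|$ and $|T^*|$ is exactly what will license that refinement. So I fix a unit vector $x$ and apply Buzano's inequality (Lemma \ref{buzano}) with $x\mapsto Tx$, $y\mapsto T^*x$, $e\mapsto x$. Since $\langle x,T^*x\rangle=\langle Tx,x\rangle$ and $\langle Tx,T^*x\rangle=\langle T^2x,x\rangle$, this gives
\[
|\langle Tx,x\rangle|^2\le\frac12\|Tx\|\|T^*x\|+\frac12|\langle T^2x,x\rangle|.
\]
Applying the increasing function $f$ and then its convexity with weights $\frac12,\frac12$ yields
\[
f\big(|\langle Tx,x\rangle|^2\big)\le\frac12 f\big(\|Tx\|\|T^*x\|\big)+\frac12 f\big(|\langle T^2x,x\rangle|\big).
\]

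Next I treat the geometric-mean term. Writing $a=\langle|T|^2x,x\rangle=\|Tx\|^2$ and $b=\langle|T^*|^2x,x\rangle=\|T^*x\|^2$ (both strictly positive, since $T$ is invertible), we have $\|Tx\|\|T^*x\|=\sqrt{ab}$. The crucial ingredient is the Kantorovich-refined AM--GM, namely inequality (\ref{eq 9}) at $t=\frac12$, which reads $\sqrt{ab}\le\frac{1}{\sqrt{K(m,2)}}\cdot\frac{a+b}{2}$; this is where the hypothesis $m|T|\le|T^*|$ or $m|T^*|\le|T|$ is invoked, to force the ratio of $a$ and $b$ to exceed $m$ in one direction so that, by monotonicity of $K(\cdot,2)$ on $[1,\infty)$, one has $K(a/b,2)\ge K(m,2)$. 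Setting $\alpha=1/\sqrt{K(m,2)}\in(0,1]$ and using $f$ increasing followed by Lemma \ref{convex} ($f(\alpha z)\le\alpha f(z)$), then midpoint convexity of $f$, and finally the operator Jensen inequality (Lemma \ref{jenson}) applied to the positive operators $|T|^2$ and $|T^*|^2$, I obtain
\[
f\big(\|Tx\|\|T^*x\|\big)\le\frac{1}{\sqrt{K(m,2)}}\,f\!\Big(\frac{a+b}{2}\Big)\le\frac{1}{2\sqrt{K(m,2)}}\big\langle\big(f(|T|^2)+f(|T^*|^2)\big)x,x\big\rangle.
\]

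Combining the two estimates gives, for every unit vector $x$,
\[
f\big(|\langle Tx,x\rangle|^2\big)\le\frac{1}{4\sqrt{K(m,2)}}\big\langle\big(f(|T|^2)+f(|T^*|^2)\big)x,x\big\rangle+\frac12 f\big(|\langle T^2x,x\rangle|\big).
\]
I then take the supremum over unit vectors. On the left, continuity and monotonicity of $f$ give $\sup_x f(|\langle Tx,x\rangle|^2)=f(w^2(T))$. On the right, the first term is at most $\frac{1}{4\sqrt{K(m,2)}}\|f(|T|^2)+f(|T^*|^2)\|$, since the numerical radius and the norm agree on the positive operator $f(|T|^2)+f(|T^*|^2)$, while the second term is at most $\frac12 f(w(T^2))$. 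This is exactly the asserted inequality.

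The main obstacle is the geometric-mean step, where two things must be reconciled. First, the quantities produced by Buzano are the squared norms $\|Tx\|^2=\langle|T|^2x,x\rangle$ and $\|T^*x\|^2=\langle|T^*|^2x,x\rangle$, whereas the hypothesis is stated for the first powers $|T|,|T^*|$; one must argue that the hypothesis still delivers a per-vector ratio bound strong enough that monotonicity of $K(\cdot,2)$ yields $K(a/b,2)\ge K(m,2)$, which is what powers the refined AM--GM. Second, one must chain the refined AM--GM with the scaling inequality $f(\alpha z)\le\alpha f(z)$ of Lemma \ref{convex} (where both $f(0)=0$ and $\alpha=1/\sqrt{K(m,2)}\le1$ are essential) and then operator Jensen, so that precisely the constant $\frac{1}{4\sqrt{K(m,2)}}$ and the operator $f(|T|^2)+f(|T^*|^2)$ survive. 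By comparison, the Buzano opening and the closing supremum are routine.
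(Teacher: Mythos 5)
Your proof follows essentially the same route as the paper's: Buzano's inequality applied to $Tx$, $T^*x$, $x$, then convexity of $f$, the Kantorovich-refined AM--GM at $t=\tfrac{1}{2}$ applied to $\sqrt{\langle |T|^2x,x\rangle\langle |T^*|^2x,x\rangle}$, Lemma \ref{convex}, operator Jensen, and a supremum. Your chain $\tfrac{1}{2}f(\sqrt{ab})\le \tfrac{1}{2}f\bigl(\tfrac{1}{\sqrt{K(m,2)}}\cdot\tfrac{a+b}{2}\bigr)\le \tfrac{1}{2\sqrt{K(m,2)}}f\bigl(\tfrac{a+b}{2}\bigr)\le \tfrac{1}{4\sqrt{K(m,2)}}\bigl(f(a)+f(b)\bigr)$ is in fact the correct repair of the paper's displayed intermediate steps, which as written pass through $\tfrac{1}{2}f(a+b)\le\tfrac{1}{2}\bigl(f(a)+f(b)\bigr)$ --- false in general for convex $f$ with $f(0)=0$, which is superadditive --- though the endpoints of the paper's chain are connected correctly by exactly the midpoint-convexity argument you use. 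The one point you flag but do not settle, namely that the hypothesis $m|T|\le |T^*|$ (on first powers) must be converted into the per-unit-vector ratio bound between $\langle |T|^2x,x\rangle$ and $\langle |T^*|^2x,x\rangle$ needed to conclude $K(\cdot,2)\ge K(m,2)$, is a genuine gap (squaring does not preserve operator order), but it is equally unresolved in the paper, whose proof simply sets $m=\langle |T|^2u,u\rangle/\langle |T^*|^2u,u\rangle$ at the very end, in tension with the theorem's stated hypothesis.
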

\begin{proof}
	Taking $x=Tu, y=T^*u, e=u$ with $\|u\|=1$ in Lemma \ref{buzano} and applying the monotonocity of the function $f$ we get,
	\begin{eqnarray*}
		f\left( |\langle Tu, u \rangle|^2\right) &\leq& \frac{1}{2}f\left(\|Tu\|\|T^*u\| + |\langle T^2u, u \rangle| \right)~(\mbox{using Lemma \ref{convex}}) \\
		&\leq& \frac{1}{2}\left[f(\|Tu\|\|T^*u\|) + f(|\langle T^2u, u \rangle|)\right]\\
		&=& \frac{1}{2}f\left(\langle |T|^2u, x \rangle^{1/2}\langle |T^*|^2u, x \rangle^{1/2} \right)+ \frac{1}{2}f(|\langle T^2u, u \rangle|)\\
		&\leq& f\left(\frac{1}{4\sqrt{K(m,2)}}\left\langle(|T|^2+ |T^*|^2)u, u\right\rangle \right) + \frac{1}{2} f\left(|\langle T^2u, u \rangle| \right)\\
			&&\,\,\,\,\,\,\,\,\,\,\,\,\,\,\,\,\,\,\,\,\,\,\,\,\,\,\,\,\,\,\,\,\,\,\,\,\,\,\,\,\,\,\,\,\,\,\,\,\,\,\,\,\,\,\,\,\,\,\,\,\,\,\,(\mbox{using inequality (\ref{eq 9})})
			\end{eqnarray*}
	\begin{eqnarray*}
		&\leq& \frac{1}{4\sqrt{K(m,2)}} \left(f(\langle|T|^2x, x \rangle\right) + f\left(\langle |T^*|^2u,u \rangle \right)+ \frac{1}{2} f\left( |\langle T^2u, u \rangle|\right)	 \\
		&=& \frac{1}{4\sqrt{K(m,2)}} \left\langle(f(|T|^2) + f(|T^*|^2)u, u  \right\rangle+ \frac{1}{2} f\left( |\langle T^2u, u \rangle|\right)\\
		&\leq& \frac{1}{4\sqrt{K(m,2)}} \left\|f(|T^2|) + f(|T^*|^2) \right\| + \frac{1}{2} f(w(T^2)).
	\end{eqnarray*}
	where $m= \frac{\langle |T|^2u, u \rangle}{\langle |T^*|^2u, u \rangle}$ and $K(m, 2) = \frac{(m+1)^2}{4m}.$
\end{proof}
If we take $f(t) = t^r$ with $r \geq 1$ in Theorem \ref{th1}, we obtain the following  inequality which is clearly an improvement of the inequality (\ref{eq 3}) for invertible operator.
\begin{cor}
	Let $T \in \mathcal{B}(\mathcal{H})^{-1}.$ If $m|T| \leq |T^*|$ or $m|T^*| \leq |T|$ with $m>1,$ we get \[ w^{2r}(T) \leq \frac{1}{4\sqrt{K(m,2)}} \left\||T^{2r} + |T^*|^{2r}| \right\| + \frac{1}{2} w^r(T^2).\]
\end{cor}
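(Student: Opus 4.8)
The plan is to obtain this corollary as a direct specialization of Theorem \ref{th1}, taking the abstract function $f$ to be the power function $f(t) = t^r$ with $r \geq 1$. Since Theorem \ref{th1} is already established, no new analytic input is needed; the entire task reduces to checking that $t^r$ meets the three structural hypotheses on $f$ and then reading off what the conclusion becomes under this choice.

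First I would confirm that $f(t) = t^r$ is admissible in Theorem \ref{th1}. On $[0,\infty)$ the map $t \mapsto t^r$ is increasing because $r \geq 1 > 0$, it is convex because its second derivative $r(r-1)t^{r-2}$ is non-negative for $r \geq 1$ and $t \geq 0$, and it satisfies $f(0) = 0^r = 0$. Having verified these, I would substitute directly into the conclusion of Theorem \ref{th1},
\[ f\left(w^2(T)\right) \leq \frac{1}{4\sqrt{K(m,2)}} \left\|f(|T|^2) + f(|T^*|^2)\right\| + \frac{1}{2}\, f\left(w(T^2)\right). \]
With $f(t) = t^r$ the left side becomes $\left(w^2(T)\right)^r = w^{2r}(T)$, the two summands inside the norm become $(|T|^2)^r = |T|^{2r}$ and $(|T^*|^2)^r = |T^*|^{2r}$, and the final term becomes $\left(w(T^2)\right)^r = w^r(T^2)$. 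Collecting these gives precisely
\[ w^{2r}(T) \leq \frac{1}{4\sqrt{K(m,2)}} \left\||T|^{2r} + |T^*|^{2r}\right\| + \frac{1}{2}\, w^r(T^2), \]
which is the asserted inequality; the standing hypotheses $m > 1$ and $m|T| \leq |T^*|$ (or $m|T^*| \leq |T|$) are simply inherited from Theorem \ref{th1}.

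Because the argument is a pure specialization, I do not expect any genuine obstacle: the only point requiring care is the elementary verification of monotonicity, convexity, and $f(0)=0$ for $t^r$, which is exactly what restricts the range to $r \geq 1$. I would close by recording the improvement the statement promises in the case $r=1$: since $K(m,2) \geq 1$ for $m > 1$ (by property (1) of the Kantorovich ratio), one has $\tfrac{1}{4\sqrt{K(m,2)}} \leq \tfrac14$, so the resulting bound is no larger than the corresponding term in inequality (\ref{eq 3}), giving the claimed refinement for invertible operators.
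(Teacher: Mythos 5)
Your proposal is correct and is exactly the paper's route: the corollary is stated there as the immediate specialization $f(t)=t^r$, $r\geq 1$, of Theorem \ref{th1}, and your verification that $t^r$ is increasing, convex, and vanishes at $0$ is the only content needed. Your substitution also yields the correctly typeset bound $\left\||T|^{2r}+|T^*|^{2r}\right\|$, fixing what is evidently a typographical slip in the corollary's displayed statement.
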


\begin{lemma} \label{ext buzano}
	Let $x, y, e \in \mathcal{H}$ with $\|e\|=1.$ Then for $r\geq 1$ and $\lambda \in [0,1]$, we have 
	\[|\langle x, e \rangle \langle e, y \rangle|^r \leq \beta \|x\|^r \|y\|^r + \lambda |\langle x, y \rangle|^r \] where $\beta = \min \left\{ \frac{1+ \lambda}{2}, 1- \frac{\lambda}{2}\right\}$ and $\gamma = \min \left\{\frac{1-\lambda}{2}, \frac{\lambda}{2} \right\}.$
\end{lemma}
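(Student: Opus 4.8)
The plan is to derive the estimate from Buzano's inequality (Lemma \ref{buzano}) together with the convexity of $t\mapsto t^{r}$ on $[0,\infty)$ for $r\ge 1$. Set $a=\|x\|\,\|y\|$ and $b=|\langle x,y\rangle|$; the Cauchy--Schwarz inequality gives $0\le b\le a$ (and also $|\langle x,e\rangle\langle e,y\rangle|\le a$, since $\|e\|=1$), while Buzano gives $|\langle x,e\rangle\langle e,y\rangle|\le \tfrac12(a+b)$. The whole argument will consist of turning this single scalar inequality, valid for $0\le b\le a$, into a bound of the shape $\beta a^{r}+\gamma b^{r}$.

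First I would produce one candidate bound by reweighting the Buzano average. Since $a\ge b$ and $\lambda\ge 0$,
\[
\frac{a+b}{2}\le \frac{1+\lambda}{2}\,a+\frac{1-\lambda}{2}\,b ,
\]
and because the weights $\tfrac{1+\lambda}{2},\tfrac{1-\lambda}{2}$ are nonnegative and sum to $1$, convexity of $t\mapsto t^{r}$ yields
\[
|\langle x,e\rangle\langle e,y\rangle|^{r}\le \frac{1+\lambda}{2}\,a^{r}+\frac{1-\lambda}{2}\,b^{r}.
\]
Next I would produce a complementary bound by splitting the exponent weight as $t^{r}=\lambda t^{r}+(1-\lambda)t^{r}$, estimating the first copy through the convexity form of Buzano, $\big(\tfrac{a+b}{2}\big)^{r}\le \tfrac12(a^{r}+b^{r})$, and the second copy through the plain bound $|\langle x,e\rangle\langle e,y\rangle|\le a$. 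This gives
\[
|\langle x,e\rangle\langle e,y\rangle|^{r}\le \Big(1-\tfrac{\lambda}{2}\Big)a^{r}+\tfrac{\lambda}{2}\,b^{r}.
\]

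Finally I would assemble the two displays into the stated form, the constants $\beta=\min\{\tfrac{1+\lambda}{2},1-\tfrac{\lambda}{2}\}$ and $\gamma=\min\{\tfrac{1-\lambda}{2},\tfrac{\lambda}{2}\}$ being exactly the coefficientwise minima of the two candidate bounds, with the crossover at $\lambda=\tfrac12$. The hard part will be precisely this combination step: possessing two valid upper bounds does not by itself license replacing each coefficient with the smaller of the two, so the estimate must be carried out as a single chain in each of the regimes $\lambda\le\tfrac12$ and $\lambda\ge\tfrac12$ rather than as a formal minimum of the two inequalities above. I would therefore first test the extreme configurations $b=0$ and $b=a$ (the latter realized by $x=y=e$) to pin down which elementary inequality is active in each regime, and check that the claimed constants are genuinely attainable there, before committing to the general computation.
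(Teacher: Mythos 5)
Your two constituent estimates are exactly the ones the paper derives as (\ref{buzeq1}) and (\ref{buzeq2}): you reach the first by reweighting the Buzano average before applying convexity, while the paper splits $|\langle x,e\rangle\langle e,y\rangle|$ into a $\lambda$-part bounded by Cauchy--Schwarz and a $(1-\lambda)$-part bounded by Buzano, but the resulting pair of bounds is identical. The difficulty you flag at the end is not an obstacle to be overcome but a genuine obstruction: the paper's proof concludes with ``From inequalities (\ref{buzeq1}) and (\ref{buzeq2}) result follows,'' which is precisely the illegitimate coefficientwise minimum, and the test configuration you propose refutes the statement. Take $x=y=e$ with $\|e\|=1$, so that $\|x\|\,\|y\|=|\langle x,y\rangle|=1$ and the left-hand side equals $1$, while the right-hand side (reading the display with $\gamma$ in place of the evident typo $\lambda$) equals $\beta+\gamma$. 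A direct computation gives $\beta+\gamma=\tfrac12+\lambda$ for $\lambda\le\tfrac12$ and $\beta+\gamma=\tfrac32-\lambda$ for $\lambda\ge\tfrac12$, so $\beta+\gamma<1$ for every $\lambda\ne\tfrac12$; at $\lambda=0$, for instance, the claimed bound reads $1\le\tfrac12$ (and the same configuration also defeats the literal reading with coefficient $\lambda$). The lemma as stated is therefore false except at $\lambda=\tfrac12$, where it reduces to a single application of Buzano's inequality plus convexity.

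What is actually established---by your argument and by the paper's---is that the bound holds with the coefficient pair $\bigl(\tfrac{1+\lambda}{2},\tfrac{1-\lambda}{2}\bigr)$ and also with the pair $\bigl(1-\tfrac{\lambda}{2},\tfrac{\lambda}{2}\bigr)$, each of which sums to $1$; one may use whichever pair is preferable in a given application, but not the componentwise minimum of the two. Your decision to stop and test the extreme case $|\langle x,y\rangle|=\|x\|\,\|y\|$ before committing to the combination step is exactly right: there is no valid ``general computation'' that produces the stated constants, and the lemma (together with Proposition \ref{kantorovich 1}, which imports $\beta$ and $\gamma$ from it) needs to be reformulated with one of the two honest coefficient pairs.
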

\begin{proof}
	Using the convexity of the function $f(t) =t^r,$ where $r \geq 1$ we get,
	\begin{eqnarray*}
		|\langle x, e \rangle \langle e, y \rangle| &=& \lambda |\langle x, e \rangle \langle e, y \rangle| + (1-\lambda) |\langle x, e \rangle \langle e, y \rangle|\\
		&\leq& \lambda \|x\|\|y\| + \frac{1-\lambda}{2} \left[\|x\|\|y\| + |\langle x, y \rangle| \right]~~(\mbox{using Lemma \ref{buzano}})\\
		\Rightarrow |\langle x, e \rangle \langle e, y \rangle|^r &\leq& \lambda \|x\|^r\|y\|^r + (1-\lambda) \left[\frac{\|x\|\|y\| + |\langle x, y \rangle|}{2} \right]^r\\
		&\leq& \lambda \|x\|^r\|y\|^r + \frac{1-\lambda}{2} \left[\|x\|^r\|y\|^r + ||\langle x, y \rangle |^r \right]\\
		&=& \frac{1+\lambda}{2} \|x\|^r\|y\|^r + \frac{1-\lambda}{2}|\langle x, y \rangle |^r.
	\end{eqnarray*}
	So, we have,
	\begin{equation}\label{buzeq1}
	|\langle x, e \rangle \langle e, y \rangle|^r \leq  \frac{1+\lambda}{2} \|x\|^r\|y\|^r + \frac{1-\lambda}{2}|\langle x, y \rangle |^r.
	\end{equation}
	Similarly, we get,
	\begin{equation}\label{buzeq2}
	|\langle x, e \rangle \langle e, y \rangle|^r \leq \left(1-\frac{\lambda}{2}\right)\|x\|^r\|y\|^r + \frac{\lambda}{2} |\langle x, y \rangle|^r.
	\end{equation}
	From inequalities (\ref{buzeq1}) and (\ref{buzeq2}) result follows.
\end{proof}

\begin{prop} \label{kantorovich 1}
	Let $T \in \mathcal{B}(\mathcal{H})^{-1}$. Then for $m, r \geq 1,$ and If $m|T| \leq |T^*|$ or $m|T^*| \leq |T|$
	\[w^r(T) \leq \frac{\beta}{2 \sqrt{K(m,2)}} \left\||T|^2 + |T^*|^2 \right\|  + \gamma w(T^2)\]
	where $\beta$ and $\gamma$ are as in Lemma \ref{ext buzano}, $m= \frac{\langle |T|^2u, u \rangle}{\langle |T^*|^2u, u \rangle}$ and $K(m, 2) = \frac{(m+1)^2}{4m}.$
\end{prop}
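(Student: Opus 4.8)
The plan is to run the argument of Theorem~\ref{th1}, but to feed it the sharper splitting of Lemma~\ref{ext buzano} in place of the raw Buzano inequality, so that the coefficients $\beta,\gamma$ appear directly and no intermediate convexity step is required. Fix a unit vector $u\in\mathcal{H}$ and substitute $x=Tu$, $y=T^*u$, $e=u$ into Lemma~\ref{ext buzano}. Since $\langle u,T^*u\rangle=\langle Tu,u\rangle$ and $\langle Tu,T^*u\rangle=\langle T^2u,u\rangle$, the left-hand side collapses to $|\langle Tu,u\rangle|^{2}$, and the inequality reads
\[
|\langle Tu,u\rangle|^{2}\le \beta\,\|Tu\|\,\|T^*u\|+\gamma\,|\langle T^2u,u\rangle|.
\]
This already isolates the two terms that will eventually produce $\||T|^2+|T^*|^2\|$ and $w(T^2)$.

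Next I would sharpen the geometric-mean term using Kantorovich. Writing $a=\langle |T|^2u,u\rangle=\|Tu\|^2$ and $b=\langle |T^*|^2u,u\rangle=\|T^*u\|^2$ (both strictly positive because $T$ is invertible), one has $\|Tu\|\,\|T^*u\|=\sqrt{ab}$. Applying inequality~(\ref{eq 9}) with $t=\tfrac12$, for which $\min\{t,1-t\}=\tfrac12$, gives $\sqrt{K(m,2)}\,\sqrt{ab}\le\frac{a+b}{2}$ with $m=a/b$, that is,
\[
\|Tu\|\,\|T^*u\|\le\frac{1}{2\sqrt{K(m,2)}}\,\big\langle(|T|^2+|T^*|^2)u,u\big\rangle.
\]
Inserting this into the previous display and then taking the supremum over all unit vectors $u$, while bounding $\langle(|T|^2+|T^*|^2)u,u\rangle\le\||T|^2+|T^*|^2\|$ and $|\langle T^2u,u\rangle|\le w(T^2)$, yields
\[
w^{2}(T)\le\frac{\beta}{2\sqrt{K(m,2)}}\,\big\||T|^2+|T^*|^2\big\|+\gamma\,w(T^2),
\]
which is the asserted bound (here the exponent on the left is $2$, matching the degree of the right-hand side; this corresponds to taking the exponent in Lemma~\ref{ext buzano} equal to $1$).

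The one genuinely delicate point is the same uniformization issue that underlies Theorem~\ref{th1}: the Kantorovich ratio $m=a/b$ depends on the chosen unit vector $u$, whereas the final constant must be the fixed $m$ recorded in the hypothesis. I would resolve it exactly as there, using the assumption $m|T|\le|T^*|$ or $m|T^*|\le|T|$ (with $m>1$) to keep the pointwise ratio $a/b$ bounded away from $1$, and then invoking the monotonicity of $K(\cdot,2)$ together with the symmetry $K(m,2)=K(1/m,2)$ (properties~(1)--(2) of the Kantorovich ratio) to conclude $K(a/b,2)\ge K(m,2)$, hence $\tfrac{1}{\sqrt{K(a/b,2)}}\le\tfrac{1}{\sqrt{K(m,2)}}$. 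This monotonicity replacement is what legitimizes passing to the supremum with a single constant, and it is the step that genuinely uses the invertibility of $T$ and the spectral hypothesis; everything else is the routine combination of Lemma~\ref{ext buzano} with the Kantorovich-refined arithmetic--geometric mean inequality above.
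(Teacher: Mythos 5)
Your argument is precisely the paper's own: the published proof of Proposition \ref{kantorovich 1} is the single sentence ``replace $x$ by $Tx$, $y$ by $T^*x$, $e$ by $x$ in Lemma \ref{ext buzano} and proceed as in Theorem \ref{th1}'', which is exactly the substitution-plus-Kantorovich-refined-AM--GM computation you carry out, including the same appeal to the monotonicity and symmetry of $K(\cdot,2)$ to replace the $u$-dependent ratio by the fixed $m$ from the hypothesis. The only point worth recording is the exponent, which you already flag: the substitution produces $w^2(T)$ on the left (and with the lemma's exponent $r$ it would produce $w^{2r}(T)\le \beta\,\bigl(2\sqrt{K(m,2)}\bigr)^{-r}\bigl\||T|^2+|T^*|^2\bigr\|^r+\gamma\,w^r(T^2)$), so the printed conclusion ``$w^r(T)\le\cdots$'' with an $r$-free right-hand side is matched only at $r=2$; this is a defect of the proposition as stated rather than of your proof, and the paper's one-line proof inherits it identically.
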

\begin{proof}
	The proof follows from Lemma \ref{ext buzano} replacing $x$ by $Tx$, $y$ by $T^*x$,~ $e$ by $x$ and proceeding similarly like in Theorem \ref{th1}.
\end{proof}

			\bibliographystyle{amsplain}
			
\end{document}